\newtheorem{Theorem}{Theorem}[section]
\newtheorem{Proposition}{Proposition}[section]
\newtheorem{Lemma}{Lemma}[section]
\theoremstyle{definition}
\newtheorem{Definition}{Definition}[section]
\newtheorem{Assumptions}{Hypothesis}[section]
\def\R{{\mathbb{R}}}
\def\bs{\boldsymbol}
\def\cA{{\mathcal{A}}}
\def\cU{{\mathcal{U}}}
\def\t\cU{{\widetilde{{\mathcal{U}}}}}
\newcommand\norm[1]{\left\lVert#1\right\rVert}
\def\ds{\displaystyle}
\title {A stability result for a degenerate beam equation}
\author{{{\sc Alessandro Camasta}\thanks{The author is a member of the  {\it Gruppo Nazionale per l'Analisi Ma\-te\-matica, la Probabilit\`a e le loro Applicazioni (GNAMPA)} of the Istituto Nazionale di Alta Matematica (INdAM), a member of {\it UMI ``Modellistica Socio-Epidemiologica (MSE)''} and he is partially supported by PRIN 2017-2019 {\it Qualitative and quantitative aspects of nonlinear PDEs.} He is also supported by the project {\it Mathematical models for interacting dynamics on networks (MAT-DYN-NET)
CA18232  and by the GNAMPA project 2023 {\em Modelli differenziali per l'evoluzione del clima e i suoi impatti}.}}}\\
	Department of Mathematics\\ University of Bari Aldo Moro\\
	Via
	E. Orabona 4\\ 70125 Bari - Italy\\ e-mail: alessandro.camasta@uniba.it\\
	{\sc Genni Fragnelli}\thanks{The author is a member of the  {\it Gruppo Nazionale per l'Analisi Ma\-te\-matica, la Probabilit\`a e le loro Applicazioni (GNAMPA)} of the Istituto Nazionale di Alta Matematica (INdAM), a member of {\it UMI ``Modellistica Socio-Epidemiologica (MSE)''} and is supported by FFABR {\it Fondo per il finanziamento delle attivit\`a base di ricerca} 2017, by  PRIN 2017-2019 {\it Qualitative and quantitative aspects of nonlinear PDEs} and by the
HORIZON$_-$EU$_-$DM737 project 2022 {\it COntrollability of PDEs in the Applied Sciences (COPS)} at Tuscia University. She is also supported by the project {\it Mathematical models for interacting dynamics on networks (MAT-DYN-NET)
CA18232  and by the GNAMPA project 2023 {\em Modelli differenziali per l'evoluzione del clima e i suoi impatti}.}
\\
This paper originated during the {\it ACIPDif 21 School-Workshop on Analysis, Control $\&$ Inverse Problems for Diffusive
Systems with Application to Natural and Social Sciences}, Bari 18-22/07/22, within the Project Horizon Europe Seeds {\it STEPS: STEerability and controllability of PDES in Agricultural and Physical models.
}}\\
	Department of Ecology and Biology\\ Tuscia University\\
	Largo dell'Universit\`a, 01100 Viterbo - Italy\\ e-mail: genni.fragnelli@unitus.it}
\date{}
\begin{document}
	
	\maketitle
	
	\begin{abstract}
We consider a beam equation in presence of a leading degenerate operator which is not in divergence form. We
impose clamped conditions where the degeneracy
occurs and dissipative conditions at the other endpoint. We provide some
conditions for the uniform exponential decay of solutions for the associated
problem.
\end{abstract}
\noindent Keywords: 
degenerate beam equation, stabilization, exponential decay

	\noindent 2000AMS Subject Classification: 35L80, 93D23, 93D15

	\section{Introduction}
This paper is devoted to study the stabilization of a boun\-dary degenerate problem with dissipative conditions. In particular, we consider the following problem
	\begin{equation}\label{(P)}
		\begin{cases}
			y_{tt}(t,x)+a(x)y_{xxxx}(t,x)=0, &(t,x)\in Q_T,\\
			y(t,0)=0,\,\,y_x(t,0)=0, &t\in (0,T),\\
			\beta y(t,1)-y_{xxx}(t,1)+y_t(t,1)=0, &t \in (0,T),\\
			\gamma y_x(t,1)+y_{xx}(t,1)+y_{tx}(t,1)=0, &t \in (0,T),\\
			y(0,x)=y_0(x),\,\,y_t(0,x)=y_1(x),&x\in(0,1),
		\end{cases}
	\end{equation}
where $Q_T:=(0,T) \times (0,1)$, $T>0$, $\beta$ and $\gamma$ are non negative constants  and the function $a$ is such that $a(0)=0$ and $a (x) >0$ for all $x \in (0,1]$. 

Problems similar to \eqref{(P)} are considered in several papers (see, for example, \cite{behn}, \cite{biselli}, \cite{chen1}, \cite{chen2}, \cite{chen}, \cite{coleman}, \cite{sandilo}).
In particular, in \cite{chen1} and \cite{chen} the following Euler-Bernoulli beam equation is considered
\begin{equation}\label{EI}
my_{tt}(t,x) + EIy_{xxxx}(t,x)=0, \quad x \in (0,1), \;t>0,
\end{equation}
with clamped conditions at the left end
\begin{equation}\label{co0}
y(t,0)=0, \quad y_x(t,0)=0,
\end{equation}
and with dissipative conditions at the right end
\begin{equation}\label{shear}
\begin{cases}
 -EIy_{xxx}(t,1)+\mu_1y_t(t,1)=0, & \mu_1 \ge0,\\
EIy_{xx}(t,1)+\mu_2y_{tx}(t,1)=0, & \mu_2 \ge0.
\end{cases}
\end{equation}
Here  $m$ is the mass density per unit length and
$EI$ is the flexural rigidity coefficient. Moreover,  the following variables have engineering meanings:
$y$ is the vertical displacement, $y_t$ is the velocity, $y_x$ is the rotation, $y_{tx}$ is the angular velocity, $-EIy_{xx}$ is the bending moment and $-EIy_{xxx}$ is the shear.  In particular, the boundary conditions \eqref{shear}  mean that the shear is proportional to the velocity and the bending moment is negatively proportional to the angular moment. Observe that  if we consider $\beta=\gamma=0$ in \eqref{(P)}, then we have boundary conditions analogous to those in \eqref{shear}. Thus, the dissipative conditions at $1$ are not surprising. We remark that the condition $\beta, \gamma \ge 0$ are necessary to study the well posedness of the problem and to prove the equivalence among all the norms introduced in the paper and that are crucial to obtain the stability result.

The qualitative behaviour of  \eqref{EI}-\eqref{shear} is studied in \cite{chen2}, where it is shown that if $\mu_1^2 >0$ and $\mu_2^2 \ge0$, then the energy of vibration of the beam
\[
E(t)= \frac{1}{2} \int_0^1(m y_t^2(t,x) + EI y_{xx}^2(t,x))dx
\]
decays exponentially in an uniform way:
\begin{equation}\label{decay}
E(t) \le k e^{-\mu t}E(0) 
\end{equation}
for some $k, \mu >0$. Actually, in \cite{chen2} the authors study the stabilization and the control of composite
beams or serially connected beams. Boundary exact controllability on linear beam problems has been also studied in \cite{bugariu}, \cite{krabs}-\cite{rao} and the references therein. In all the previous papers the equation is always non degenerate. However, there are some papers where the equation is degenerate in the sense that a {\it degenerate damping} appears in the equation of \eqref{EI} (see, for example, \cite{cavalcanti}, \cite{Cong}, \cite{narciso}). The first paper where the equation is degenerate in the sense that the fourth order operator degenerates in a point as in \eqref{(P)} is \cite{CF_Beam}, where the boundary controllability is considered.
Hence, this is the first paper where the \textit{stability} for a beam equation governed by a  {\it degenerate} fourth order operator is considered. As for second order degenerate operators (see \cite{alcale}, \cite{BFM wave eq} or \cite{Stability_Genni_Dimitri}), we consider for the function $a$ two cases: the weakly degenerate case and the strongly degenerate one, according to the following two definitions:
\begin{Definition}\label{Def1}
A function $a$ is {\it weakly degenerate at $0$}, $(WD)$ for short, if $a\in\mathcal{C}[0,1]\cap\mathcal{C}^1(0,1]$, $a(0)=0$, $a>0$ on $(0,1]$ and if
	\begin{equation}\label{sup}
		K:=\sup_{x\in (0,1]}\frac{x|a'(x)|}{a(x)},
	\end{equation}
	then $K\in (0,1)$.
\end{Definition}
\begin{Definition}\label{Def2}
A function $a$ is {\it strongly degenerate at $0$}, $(SD)$ for short,  if $a\in\mathcal{C}^1[0,1]$, $a(0)=0$, $a>0$ on $(0,1]$ and in \eqref{sup} we have $K\in [1,2)$.
\end{Definition}

Hence, the main feature in this paper is that $a$ degenerates at $x = 0$ and, as a consequence, classical methods cannot be used directly
to study such a problem and a different approach is needed. In particular, we take inspiration from the technique considered in \cite{alcale} or in \cite{Stability_Genni_Dimitri}, where the stability for a degenerate wave equation in divergence or in non divergence form is studied. Clearly, in this paper the presence of a degenerate fourth order operator brings to more difficulties with respect to the ones for the second order case. These difficulties are related to some new terms that we have to face; for example we have to estimate from above the sum $\int_s^T y^2(t,1)dt+\int_s^Ty_x^2(t,1)dt$ for every $0<s<T$ using the energy associated to the original problem. This is done in Proposition \ref{Prop 3.3} thanks to a suitable fourth order variational problem (introduced in Proposition \ref{prob variazionale}).
Observe that in \cite{BFM wave eq} the authors considered only the controllability for the second order problem in non divergence form, so the technique is completely different form the one used here.

The paper is organized in the following way: in Section \ref{sezione2} we introduce the functional framework crucial to study the well posedness of the problem (see Theorem \ref{Theorem regol}) and we derive some essential results needed for the following. In Section \ref{Section 3} we consider the energy associated to system (\ref{(P)}) and we prove some estimates that allows us to prove the main result of the paper, Theorem \ref{teoremaprincipale}, according to which also when $a$ degenerates in the sense of Definition \ref{Def1} or \ref{Def2}, then the energy satisfies \eqref{decay}, as in  \cite{chen2} for the non degenerate case. We finally underline that with our technique we treat the case  $\beta=0$ and/or $\gamma=0$,  studied in \cite{chen2} for the non degenerate case. The paper concludes with a final section on conclusions and open problems.

\noindent Notations:\\
$'$ denotes the derivative of a function depending on the real space variable $x$; \\
$\dot\ $ denotes the derivative of a function depending on the real time variable $t$;\\
$y_x^2$ or $y_{xx}^2$ means $(y_x)^2$ or $(y_{xx})^2$, respectively.

\section{Preliminary results and well posedness}\label{sezione2}
In this section we introduce the functional spaces needed to study the well posedness of \eqref{(P)}. 
Following \cite{CF}, \cite{CF_Neumann}, \cite{CF_Wentzell} or \cite{CF_Beam} (see also \cite{Libro Genni Dimitri}), let us consider the following weighted Hilbert spaces with the related inner products:
\begin{equation*}
	L^2_{\frac{1}{a}}(0, 1):=\biggl \{u\in L^2(0, 1):\int_{0}^{1}\frac{u^2}{a}\,dx<+\infty \biggr \},\,\,\,\,\,\,\,\,\left\langle u,v\right\rangle_{L^2_{\frac{1}{a}}(0,1)}:=\int_0^1\frac{uv}{a}dx,
\end{equation*}
for every $u,v \in L^2_{\frac{1}{a}}(0,1)$, and
\begin{equation*}
	H^i_{\frac{1}{a}}(0, 1):= L^2_{\frac{1}{a}}(0, 1)\cap H^i(0, 1), \,\,\,\,\,\,\,\,\langle u,v\rangle_{H^i_{\frac{1}{a}}(0,1)}:=\langle u,v\rangle_{L^2_{\frac{1}{a}}(0,1)}+\sum_{k=1}^{i}\langle u^{(k)}, v^{(k)}\rangle_{L^2(0,1)},
\end{equation*}
for every $u,v \in 	H^i_{\frac{1}{a}}(0, 1)$, $i=1,2$. Obviously the previous inner products induce the related respective norms
$
	\ds\norm{u}^2_{L^2_{\frac{1}{a}}(0, 1)}:= \int_{0}^{1}\frac{u^2}{a}dx$, for all $u\in L^2_{\frac{1}{a}}(0, 1)
$
and $ 
	\ds\norm{u}_{H^i_{\frac{1}{a}}(0, 1)}^2:=\norm{u}^2_{L^2_{\frac{1}{a}}(0, 1)} + \sum_{k=1}^{i}\Vert u^{(k)}\Vert^2_{L^2(0, 1)},$ for all $u\in H^i_{\frac{1}{a}}(0, 1),
$
$i=1,2$.  In addition to the previous ones, we introduce the following important Hilbert spaces:
\begin{equation*}
	H^1_{\frac{1}{a},0}(0, 1):= \biggl \{u\in H^1_{\frac{1}{a}}(0, 1):u(0)=0 \biggr \} \quad \text{ and}
\end{equation*}
\begin{equation*}
	H^2_{\frac{1}{a},0}(0, 1):= \biggl \{u\in H^1_{\frac{1}{a},0}(0, 1)\cap H^2(0,1):u'(0)=0 \biggr \},
\end{equation*}
with the norms $\|\cdot\|_{H^i_{\frac{1}{a}}(0, 1)}$, $i=1,2$.
Observe that  the norm $\|\cdot\|_{H^i_{\frac{1}{a}}(0, 1)}$ is equivalent to $\|\cdot\|_i$ in $H^i_{\frac{1}{a}}(0, 1)$, where
$
\| u \|_i ^2:= \|u\|_{L^2_{\frac{1}{a}}(0, 1)}^2+ \|u^{(i)}\|_{L^2(0,1)}^2, \quad i=1,2,
$
for all $u \in H^i_{\frac{1}{a}}(0, 1)$
(see, e.g., \cite{CF_Wentzell}). If $i=1$ the previous assertion is clearly true. 
Moreover, assuming  an additional hypothesis on the function $a$, one can prove that the previous norms are equivalent to the next one
$
	\|u\|_{i, \sim }:= \|u^{(i)}\|_{L^2(0,1)}, $ for all $u\in H^i_{\frac{1}{a}}(0, 1),
	$
	$i=1,2$. In particular, the next proposition holds.
\begin{Assumptions}\label{ipo1}
	The function $a$ is continuous in $[0,1]$, $a(0)=0$, $a>0$ on $(0,1]$ and there exists $K>0$ such that the function
	\begin{equation}\label{crescente}
		x\mapsto\frac{x^K}{a(x)}
	\end{equation}
	is non decreasing in a right neighbourhood of $x=0$.
\end{Assumptions}

Thus, under the previous hypothesis, one can prove the next equivalence.
\begin{Proposition}\label{norms} Assume Hypothesis \ref{ipo1}. Then the norms $\|\cdot\|_{H^i_{\frac{1}{a}}(0, 1)}$, $\| \cdot\|_i$ and
$
	\|\cdot\|_{i, \sim }$
	are equivalent in $H^i_{\frac{1}{a},0}(0, 1)$.  In particular, there exists a constant $C_{HP}>0$ such that
	$\ds
	\|u\|^2_1 \le (C_{HP}+1)\|u\|^2_{1, \sim}
	$
	for all $u \in H^1_{\frac{1}{a},0}(0, 1)$ and
	\begin{equation}\label{x}
	\|u\|^2_2 \le \left(4 C_{HP}+1\right)\|u\|^2_{2, \sim}
	\end{equation}
	for all $u \in H^2_{\frac{1}{a},0}(0, 1)$.
\end{Proposition}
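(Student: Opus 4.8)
My plan is to reduce the whole statement to a single Hardy-type inequality: producing a constant $C_{HP}>0$ such that
\[
\int_0^1\frac{u^2}{a}\,dx\le C_{HP}\int_0^1 (u')^2\,dx \qquad (\mathrm H)
\]
for every $u\in H^1_{\frac{1}{a},0}(0,1)$. Once $(\mathrm H)$ is available the first displayed estimate is immediate: since $\|u\|_{1,\sim}^2=\|u'\|_{L^2(0,1)}^2$ and $\|u\|_1^2=\|u\|_{L^2_{\frac{1}{a}}(0,1)}^2+\|u'\|_{L^2(0,1)}^2$, inequality $(\mathrm H)$ gives $\|u\|_1^2\le (C_{HP}+1)\|u\|_{1,\sim}^2$. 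The reverse comparisons $\|u\|_{i,\sim}\le\|u\|_i\le\|u\|_{H^i_{\frac{1}{a}}(0,1)}$ are trivial, and the equivalence between $\|\cdot\|_{H^i_{\frac{1}{a}}(0,1)}$ and $\|\cdot\|_i$ is the one already recalled before the statement; hence proving $(\mathrm H)$ and its second-order analogue is all that is needed to obtain the equivalence of the three norms.

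To establish $(\mathrm H)$ I would use Hypothesis \ref{ipo1} only in a right neighbourhood $(0,\delta]$ of the degeneracy, choosing there a $K$ for which $x\mapsto x^K/a(x)$ is non-decreasing; in the settings of Definitions \ref{Def1} and \ref{Def2} one may take $K=\sup_{(0,1]}x|a'|/a$, the monotonicity following at once from $Ka-xa'\ge0$, and crucially $K<2$. Monotonicity yields $x^K/a(x)\le \delta^K/a(\delta)=:M$, i.e. $1/a(x)\le Mx^{-K}$ on $(0,\delta]$. Combining this with the elementary consequence of $u(0)=0$,
\[
u(x)^2=\Big(\int_0^x u'(s)\,ds\Big)^2\le x\int_0^x (u'(s))^2\,ds\le x\,\|u'\|_{L^2(0,1)}^2,
\]
and using $K<2$ to make $\int_0^\delta x^{1-K}\,dx$ converge, I bound the near-$0$ part by $\frac{M\delta^{2-K}}{2-K}\|u'\|_{L^2(0,1)}^2$. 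On the complementary interval $[\delta,1]$ the weight $1/a$ is bounded (because $a$ is continuous and strictly positive there), so that contribution is controlled by $\int_\delta^1 u^2\,dx$, which the same pointwise bound dominates by $\frac12\|u'\|_{L^2(0,1)}^2$. Adding the two pieces defines $C_{HP}$ and proves $(\mathrm H)$.

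For the second-order estimate \eqref{x} the essential gain is the factor $4$. Here $u\in H^2_{\frac{1}{a},0}(0,1)$ enjoys in addition $u'(0)=0$, so I would first apply $(\mathrm H)$ to $u$ to get $\|u\|_{L^2_{\frac{1}{a}}(0,1)}^2\le C_{HP}\|u'\|_{L^2(0,1)}^2$, and then control $\|u'\|_{L^2(0,1)}^2$ by $\|u''\|_{L^2(0,1)}^2$ through the classical (non-degenerate) Hardy inequality applied to $u'$: since $u'(0)=0$ and $1/x^2\ge1$ on $(0,1)$,
\[
\int_0^1 (u')^2\,dx\le\int_0^1\frac{(u')^2}{x^2}\,dx\le 4\int_0^1 (u'')^2\,dx,
\]
which is exactly where the constant $4$ enters. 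Chaining gives $\|u\|_{L^2_{\frac{1}{a}}(0,1)}^2\le 4C_{HP}\|u''\|_{L^2(0,1)}^2$, hence \eqref{x}, and the equivalence of the three norms on $H^2_{\frac{1}{a},0}(0,1)$ follows as before. I expect the main obstacle to be precisely the near-$0$ analysis in $(\mathrm H)$: it is here that Hypothesis \ref{ipo1} is indispensable through the bound $K<2$, since for $K>2$ the weight $x^{-K}$ is too singular and $(\mathrm H)$ genuinely fails; the rest is a routine splitting and chaining of elementary estimates.
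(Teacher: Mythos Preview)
Your argument is correct and, for the second-order estimate \eqref{x}, coincides with the paper's proof line by line: the paper also applies $(\mathrm H)$ to $u$, then inserts $1\le 1/x^2$ on $(0,1)$, and finally invokes the classical Hardy inequality for $z:=u'$ (using $u'(0)=0$) to pick up exactly the factor $4$. The only difference is at the level of $(\mathrm H)$ itself: the paper does not prove it but quotes it from \cite[Proposition 2.6]{cfr1} and then takes $C_{HP}$ to be the best constant, whereas you supply a self-contained proof via the splitting $(0,\delta]\cup[\delta,1]$ and the monotonicity of $x^K/a(x)$. One small caveat worth flagging: your direct proof of $(\mathrm H)$ genuinely needs $K<2$ to make $\int_0^\delta x^{1-K}\,dx$ converge, while Hypothesis~\ref{ipo1} as stated imposes no upper bound on $K$; this is harmless in the paper's context (the (WD)/(SD) framework enforces $K<2$, and indeed $(\mathrm H)$ can fail for $a(x)=x^K$ with $K>2$), but it means your argument proves the Proposition under the same effective hypotheses the paper relies on rather than under Hypothesis~\ref{ipo1} in full generality.
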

\begin{proof} 
By \cite[Proposition 2.6]{cfr1}, one has that there exists $C>0$ such that
\begin{equation}\label{HP}
\int_0^1\frac{u^2}{a}dx \le C \int_0^1 (u')^2dx,
\end{equation}
for all  $u \in H^1_{\frac{1}{a},0}(0, 1)$. Let
\begin{equation}\label{CHP}
C_{HP} \text{ be the best constant of \eqref{HP}}.
\end{equation}
Thus the thesis follows immediately if $i=1$. Now, assume $i=2$ and fix $u \in H^2_{\frac{1}{a},0}(0, 1)$. Proceeding as for $i=1$ and applying the classical Hardy's inequality to $z:= u'$  (observe that $z \in H^1_{\frac{1}{a},0}(0, 1)$), we have
\[
\begin{aligned}
\int_0^1\frac{u^2}{a}dx &\le C_{HP} \int_0^1 (u')^2dx \le C_{HP}\int_0^1 \frac{z^2}{x^2}dx \\
&\le 4  C_{HP}\int_0^1 (z')^2dx= 4 C_{HP}\int_0^1 (u'')^2dx= 4C_{HP}\|u\|^2_{2, \sim}
\end{aligned}
\]
 and the thesis follows.
\end{proof}
Observe that in Hypothesis \ref{ipo1} we require only continuity on $a$; however, if $a$ is (WD) or (SD), then  the monotonicity property \eqref{crescente} holds globally in $(0,1]$ and
$
	\ds\lim_{x\to 0}\frac{x^\gamma}{a(x)}=0
$
for all $\gamma >K$. 
Thus, assuming in the rest of the paper that $a$ is (WD) or (SD), we can use indifferently $\|\cdot\|_i$ or $\|\cdot\|_{i, \sim }$ in place of $\|\cdot\|_{H^i_{\frac{1}{a}}(0, 1)}$, $i=1,2$.
 Moreover, the following Gauss-Green formulas will be crucial. Setting
\[
\mathcal{W}(0,1):=\Bigl \{u\in H^2_{\frac{1}{a}}(0, 1):au''''\in L^2_{\frac{1}{a}}(0, 1)\Bigr \},\]
we have:

\begin{Lemma}[\cite{CF_Neumann}]\label{Green}For all $(u,v)\in \mathcal{W}(0,1)\times H^2_{\frac{1}{a}}(0, 1)$ one has
	\begin{enumerate}
	\item if $a$ is (WD), then  $\ds
		\int_{0}^{1}u''''v\,dx=[u'''v]^{x=1}_{x=0}-[u''v']^{x=1}_{x=0}+\int_{0}^{1}u''v''dx;$
\item if $a$ is (SD), then
	$\ds
			\int_{0}^{1}u''''v\,dx=u'''(1)v(1)-[u''v']^{x=1}_{x=0}+\int_{0}^{1}u''v''dx.
$
\end{enumerate}
	\end{Lemma}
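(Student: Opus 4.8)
The plan is to establish both identities by integrating by parts twice on the non-degenerate subinterval $[\delta,1]$ and then letting $\delta\to 0^+$, the only delicate point being the behaviour at the degenerate endpoint $x=0$. First I would verify that every quantity in the statement is finite. Since $au''''\in L^2_{\frac{1}{a}}(0,1)$ we have $\sqrt a\,u''''\in L^2(0,1)$, and since $v\in H^2_{\frac{1}{a}}(0,1)\subset L^2_{\frac{1}{a}}(0,1)$ we have $v/\sqrt a\in L^2(0,1)$; writing $u''''v=(\sqrt a\,u'''')(v/\sqrt a)$ and applying the Cauchy--Schwarz inequality shows $u''''v\in L^1(0,1)$, so the left-hand side is well defined. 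Likewise $u'',v''\in L^2(0,1)$ because $u,v\in H^2(0,1)$, whence $u''v''\in L^1(0,1)$. Moreover $u,v\in H^2(0,1)\subset C^1[0,1]$, so $u''$, $v'$, $v$ possess traces at both endpoints, while near $x=1$ the weight $a$ is bounded below, so $u\in H^4(\rho,1)$ for some $\rho<1$ and the traces $u''(1),u'''(1)$ exist as well.

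Next, for fixed $\delta\in(0,1)$ the weight $a$ is bounded above and below on $[\delta,1]$, so $u''''\in L^2(\delta,1)$; together with $u\in H^2(\delta,1)$ this yields $u\in H^4(\delta,1)$, and the classical Gauss--Green formula applies:
\begin{equation*}
\int_\delta^1 u''''v\,dx=\bigl[u'''v\bigr]_\delta^1-\bigl[u''v'\bigr]_\delta^1+\int_\delta^1 u''v''\,dx.
\end{equation*}
As $\delta\to 0^+$ the two integrals converge to their counterparts on $(0,1)$ by the integrability established above, the term $\bigl[u''v'\bigr]_\delta^1$ converges to $\bigl[u''v'\bigr]_0^1$ by continuity of $u''$ and $v'$, and the contribution of $[u'''v]$ at $x=1$ is the fixed number $u'''(1)v(1)$. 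Everything therefore reduces to the limit of $u'''(\delta)v(\delta)$.

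The heart of the proof is this last limit, and it is exactly where the two cases split. Using that $x\mapsto x^K/a(x)$ is non-decreasing near $0$ one gets $1/a(x)\le C\,x^{-K}$, whence, for $0<x<x_0$,
\begin{equation*}
|u'''(x)-u'''(x_0)|\le\Bigl(\int_x^{x_0} a\,(u'''')^2\,ds\Bigr)^{1/2}\Bigl(\int_x^{x_0}\frac{ds}{a}\Bigr)^{1/2}\le M\Bigl(\int_x^{x_0} C\,s^{-K}\,ds\Bigr)^{1/2}.
\end{equation*}
If $a$ is (WD), i.e. $K\in(0,1)$, the right-hand side stays bounded and $\int_0 ds/a<\infty$, so $u''''\in L^1(0,x_0)$ and $u'''$ extends continuously to $x=0$; hence $u'''(\delta)v(\delta)\to u'''(0)v(0)$ and the full term $\bigl[u'''v\bigr]_0^1$ survives, giving formula (1). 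If $a$ is (SD), i.e. $K\in[1,2)$, the same estimate only yields the growth bound $|u'''(x)|\le C\,x^{(1-K)/2}$ (with a logarithmic factor when $K=1$); on the other hand $a\in C^1[0,1]$ forces $a(x)\le\|a'\|_\infty\,x$, so $\int_0 ds/a=+\infty$ and the condition $v\in L^2_{\frac{1}{a}}(0,1)$ forces $v(0)=0$, whence $|v(x)|\le\|v'\|_\infty\,x$. Combining, $|u'''(x)v(x)|\le C\,x^{(3-K)/2}\to 0$, so the boundary term at $0$ disappears and only $u'''(1)v(1)$ remains, giving formula (2). The main obstacle is precisely this strongly degenerate estimate: since $u'''$ need not have a finite trace at the degenerate point one cannot argue by continuity, and must instead balance the controlled blow-up of $u'''$ against the forced vanishing of $v$, which is what the quantitative control of $a$ near $0$ provides.
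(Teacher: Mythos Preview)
The paper does not supply its own proof of this lemma; it is quoted from \cite{CF_Neumann}. Your argument is the natural one and is essentially correct: localise on $[\delta,1]$, integrate by parts twice, and pass to the limit, the only nontrivial contribution being $u'''(\delta)v(\delta)$, which you handle by showing $u'''\in C[0,1]$ in the (WD) case and by balancing the controlled blow-up $|u'''(x)|\lesssim x^{(1-K)/2}$ against the forced vanishing $|v(x)|\lesssim x$ in the (SD) case.

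One small imprecision worth fixing: the inclusion $H^2(0,1)\subset C^1[0,1]$ does \emph{not} by itself give a trace for $u''$ at $x=0$, so your sentence ``$u'',v',v$ possess traces at both endpoints'' is not justified at that stage. The continuity of $u''$ on $[0,1]$ in fact follows from your own later estimate on $u'''$: in the (WD) case you show $u''''\in L^1(0,1)$, hence $u'''\in C[0,1]$ and a fortiori $u''\in C^1[0,1]$; in the (SD) case your bound $|u'''(x)|\le Cx^{(1-K)/2}$ (with a logarithm when $K=1$) gives $u'''\in L^1(0,1)$ since $(1-K)/2>-1$, whence $u''\in W^{1,1}(0,1)\subset C[0,1]$. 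Once you insert this remark, your treatment of the term $[u''v']_\delta^1\to[u''v']_0^1$ is fully justified and the proof is complete.
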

Finally, to prove the well posedness of \eqref{(P)}, we need to introduce the last Hilbert space $
	\mathcal{H}_0:=H^2_{\frac{1}{a},0}(0,1)\times L^2_{\frac{1}{a}}(0,1),
$
endowed with the inner product
\begin{equation*}
	\langle (u,v),(\tilde{u},\tilde{v})\rangle_{\mathcal{H}_0}:=\int_{0}^{1}u''\tilde{u}''dx+\int_{0}^{1}\frac{v\tilde{v}}{a}dx+\beta u(1)\tilde{u}(1)+\gamma u'(1)\tilde{u}'(1)
\end{equation*}
 and with the norm
\[	\|(u,v)\|^2_{\mathcal{H}_0}:=\int_{0}^{1}(u'')^2dx+\int_{0}^{1}\frac{v^2}{a}dx+\beta u^2(1)+\gamma (u'(1))^2
\]
for every $(u,v), \;(\tilde{u},\tilde{v})\in\mathcal{H}_0$, where $\beta, \gamma \ge 0$. 
Using $	H^2_{\frac{1}{a},0}(0, 1)$, it is possible to define the operator $A: D(A) \subset L^2_{\frac{1}{a}}(0,1)\to L^2_{\frac{1}{a}}(0,1)$ by 
$
	Au:=au'''' $, for all $u \in
	D(A):=\left\{u\in H^2_{\frac{1}{a},0}(0, 1): au''''\in L^2_{\frac{1}{a}}(0, 1) \right\}
$ and
the matrix operator $\mathcal{A}:D(\mathcal{A})\subset\mathcal{H}_0\to \mathcal{H}_0$ given by
\[	\mathcal{A}:=\begin{pmatrix}
	0 & Id \\
	-A & 0
\end{pmatrix}\]
with domain
\begin{equation*}
	\begin{aligned}
		D(\mathcal{A}):=\{(u,v)\in D(A)\times H^2_{\frac{1}{a},0}(0,1): &\beta u(1)-u'''(1)+v(1)=0,\\ &\gamma u'(1)+u''(1)+v'(1)=0 \}.
	\end{aligned}
\end{equation*}
Thanks to $D(A)$ and $H^2_{\frac{1}{a},0}(0, 1)$ one can prove a simpler Gauss-Green formula. In particular, the two formulas in Lemma \ref{Green} become
\begin{equation}\label{GF1}
\int_0^1 u''''v dx= u'''(1)v(1)-u''(1)v'(1) +\int_0^1 u''v''dx
\end{equation}
for all $(u,v) \in D(A) \times H^2_{\frac{1}{a},0}(0, 1)$.
Thanks to \eqref{GF1} one can prove the next theorem that contains the main properties of the operator $(\mathcal{A},D(\mathcal{A}))$.  Since the proof is similar to the one of  \cite{CF_Beam} or \cite{Stability_Genni_Dimitri}, we omit it.

\begin{Theorem}\label{generator}
Assume $a$ (WD) or (SD). Then the operator $(\cA, D(\cA))$ is non positive with dense domain and generates a contraction semigroup  $(S(t))_{t \ge 0}$. 
\end{Theorem}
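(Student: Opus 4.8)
The plan is to apply the Lumer--Phillips theorem, so I would verify three properties of $(\cA,D(\cA))$: that $D(\cA)$ is dense in $\cH_0$, that $\cA$ is dissipative (equivalently non positive), and that $\cA$ is maximal, i.e. the range of $\lambda I-\cA$ is all of $\cH_0$ for some (hence every) $\lambda>0$. The first two give non positivity with dense domain; together with maximality they yield the contraction semigroup.

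For dissipativity I would compute $\langle\cA(u,v),(u,v)\rangle_{\cH_0}$ for $(u,v)\in D(\cA)$. Since $\cA(u,v)=(v,-au'''')$, the inner product unfolds to $\int_0^1 u''v''\,dx-\int_0^1 u''''v\,dx+\beta u(1)v(1)+\gamma u'(1)v'(1)$. Applying the Gauss--Green formula \eqref{GF1} cancels the $\int_0^1 u''v''\,dx$ term and leaves the boundary contribution $-u'''(1)v(1)+u''(1)v'(1)$. Substituting the two conditions defining $D(\cA)$, namely $u'''(1)=\beta u(1)+v(1)$ and $u''(1)=-\gamma u'(1)-v'(1)$, the $\beta$ and $\gamma$ terms cancel and I am left with $\langle\cA(u,v),(u,v)\rangle_{\cH_0}=-v^2(1)-(v'(1))^2\le 0$, which is exactly dissipativity. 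This computation also makes transparent why $\beta,\gamma\ge 0$ is the natural sign choice, as these constants appear in the $\cH_0$ norm.

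Density of $D(\cA)$ I would handle by a standard approximation argument, showing that pairs $(u,v)$ with $u,v$ smooth and compatible with the clamped conditions at $0$ are dense in $\cH_0$ for the product topology, using the equivalence of norms from Proposition \ref{norms} to control the $H^2_{\frac1a,0}(0,1)$ component in the weighted setting.

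The main obstacle is maximality. Given $(f,g)\in\cH_0$ and $\lambda>0$, solving $(\lambda I-\cA)(u,v)=(f,g)$ reduces, after setting $v=\lambda u-f$, to the degenerate fourth order elliptic problem $au''''+\lambda^2 u=g+\lambda f$ subject to the clamped conditions at $0$ and the dissipative conditions at $1$ encoded in $D(\cA)$. I would recast this weakly on $H^2_{\frac1a,0}(0,1)$ and invoke Lax--Milgram: the bilinear form $\int_0^1 u''w''\,dx+\lambda^2\int_0^1\frac{uw}{a}\,dx+\beta u(1)w(1)+\gamma u'(1)w'(1)$ is continuous and, crucially, coercive. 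Coercivity is precisely where the degeneracy of $a$ at $0$ must be confronted, and I would lean on Proposition \ref{norms}, in particular the equivalence of $\|\cdot\|_{2,\sim}$ with the full weighted norm, to bound the form below by a multiple of $\|u\|^2_{H^2_{\frac1a}(0,1)}$. Once the weak solution $u$ is obtained, elliptic regularity in the weighted framework places it in $D(A)$, the recovered boundary terms reproduce the conditions in $D(\cA)$, and $v=\lambda u-f$ closes the range condition. With the three hypotheses in hand, Lumer--Phillips delivers the contraction semigroup $(S(t))_{t\ge 0}$.
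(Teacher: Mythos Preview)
Your approach is correct and is precisely the Lumer--Phillips strategy the paper intends; the paper itself omits the proof, deferring to \cite{CF_Beam} and \cite{Stability_Genni_Dimitri}, where the same dissipativity computation via \eqref{GF1} and the maximality step via Lax--Milgram are carried out. One small slip in your sketch: after substituting $v=\lambda u-f$ into the boundary conditions of $D(\cA)$, the bilinear form in the weak formulation carries $(\beta+\lambda)u(1)w(1)+(\gamma+\lambda)u'(1)w'(1)$ rather than just the $\beta,\gamma$ terms, and the linear functional picks up $f(1)w(1)+f'(1)w'(1)$; this only helps coercivity and does not affect your argument.
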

As in \cite{CF_Beam}, using the operator $(\mathcal A, D(\mathcal A))$,   we can rewrite (\ref{(P)}) as a Cauchy problem. Hence, by \cite[Propositions 3.1 and 3.3]{daprato}, one has the following well posedness result. 
\begin{Theorem}\label{Theorem regol}
	Assume $a$ (WD) or (SD).
	If $(y_0,y_1)\in\mathcal{H}_0$, then there exists a unique mild solution
	$
		y\in \mathcal{C}^1([0,+\infty);L^2_{\frac{1}{a}}(0,1))\cap \mathcal{C}([0,+\infty);H^2_{\frac{1}{a},0}(0,1))
$
of (\ref{(P)}) which depends continuously on the initial data $(y_0,y_1)\in \mathcal{H}_0$. Moreover, if $(y_0,y_1)\in D(\mathcal{A})$, then the solution $y$ is classical, in the sense that
$
	y\in \mathcal{C}^2([0,+\infty);L^2_{\frac{1}{a}}(0,1))\cap \mathcal{C}^1([0,+\infty);H^2_{\frac{1}{a},0}(0,1))\cap \mathcal{C}([0,+\infty);D(A))
$
and the equation of (\ref{(P)}) holds for all $t\ge 0$.
\end{Theorem}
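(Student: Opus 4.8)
The plan is to recast \eqref{(P)} as an abstract Cauchy problem in $\mathcal{H}_0$ and then read off the regularity from the generation property established in Theorem \ref{generator}. First I would set $U:=(y,y_t)$ and $U_0:=(y_0,y_1)$, so that \eqref{(P)} is formally equivalent to
\[
\dot U(t)=\mathcal{A}U(t),\qquad U(0)=U_0.
\]
The equivalence is checked componentwise: with $(u,v)=(y,y_t)$ one has $\mathcal{A}U=(y_t,-ay_{xxxx})$, whose second entry reproduces $y_{tt}+ay_{xxxx}=0$; the clamped conditions $y(t,0)=0$, $y_x(t,0)=0$ are encoded in $U(t)\in D(\mathcal{A})\subset H^2_{\frac{1}{a},0}(0,1)\times H^2_{\frac{1}{a},0}(0,1)$; and the two dissipative conditions at $x=1$ are exactly the two constraints defining $D(\mathcal{A})$, since there $v=y_t$ and hence $v'=y_{tx}$.

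Next I would invoke Theorem \ref{generator}: since $(\mathcal{A},D(\mathcal{A}))$ generates the contraction semigroup $(S(t))_{t\ge 0}$ on $\mathcal{H}_0$, the standard theory of abstract Cauchy problems (the cited \cite[Propositions 3.1 and 3.3]{daprato}) applies directly. For $U_0\in\mathcal{H}_0$ it produces the unique mild solution $U(t)=S(t)U_0\in\mathcal{C}([0,+\infty);\mathcal{H}_0)$, while the contractivity $\|S(t)\|\le 1$ yields $\|U(t)\|_{\mathcal{H}_0}\le\|U_0\|_{\mathcal{H}_0}$ and thus continuous dependence on the data. For $U_0\in D(\mathcal{A})$ the same theory upgrades the mild solution to a classical one, with $U\in\mathcal{C}^1([0,+\infty);\mathcal{H}_0)\cap\mathcal{C}([0,+\infty);D(\mathcal{A}))$ solving $\dot U=\mathcal{A}U$ pointwise for every $t\ge 0$.

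It then remains to translate the abstract regularity into the componentwise statement. Recalling $\mathcal{H}_0=H^2_{\frac{1}{a},0}(0,1)\times L^2_{\frac{1}{a}}(0,1)$ and writing $U=(y,y_t)$, membership $U\in\mathcal{C}([0,+\infty);\mathcal{H}_0)$ gives at once $y\in\mathcal{C}([0,+\infty);H^2_{\frac{1}{a},0}(0,1))$; moreover the upper-right identity block of $\mathcal{A}$ forces $\dot y$ to coincide with the second (continuous) component of $U$, whence $y\in\mathcal{C}^1([0,+\infty);L^2_{\frac{1}{a}}(0,1))$, which is precisely the mild regularity claimed. In the classical case $U\in\mathcal{C}^1([0,+\infty);\mathcal{H}_0)\cap\mathcal{C}([0,+\infty);D(\mathcal{A}))$ gives $y\in\mathcal{C}([0,+\infty);D(A))$ and, through $\ddot y=-Ay=-ay_{xxxx}\in\mathcal{C}([0,+\infty);L^2_{\frac{1}{a}}(0,1))$, the remaining regularity $y\in\mathcal{C}^2([0,+\infty);L^2_{\frac{1}{a}}(0,1))\cap\mathcal{C}^1([0,+\infty);H^2_{\frac{1}{a},0}(0,1))$; pointwise validity of the equation of \eqref{(P)} for all $t\ge 0$ then follows from $\dot U=\mathcal{A}U$.

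Because all the analytic work is concentrated in the generation statement of Theorem \ref{generator}, I do not expect any genuine obstacle: the argument is a routine application of semigroup theory. The only point requiring a little attention is the final dictionary step — reading the improved time-regularity of $y$ (the $\mathcal{C}^1$ statement in the mild case, the $\mathcal{C}^2$ statement in the classical one) off the block structure of $\mathcal{A}$ rather than directly from the raw semigroup regularity — together with the trivial remark that continuous dependence is immediate from $\|S(t)\|\le 1$.
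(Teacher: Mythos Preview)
Your proposal is correct and follows exactly the route the paper takes: the paper does not give a standalone proof but simply remarks that one rewrites \eqref{(P)} as the abstract Cauchy problem $\dot U=\mathcal{A}U$, $U(0)=(y_0,y_1)$, and then invokes \cite[Propositions~3.1 and~3.3]{daprato} together with Theorem~\ref{generator}. Your write-up merely spells out the componentwise translation that the paper leaves implicit.
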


The last result of this section, crucial in the following, is given by the next proposition.
\begin{Proposition}\label{prob variazionale}
	Assume $a$ (WD) or (SD) and consider $\beta,\gamma \ge0$. Define
	\begin{equation*}
		|||z|||^2:=\int_0^1(z'')^2dx+\beta z^2(1)+\gamma (z'(1))^2
	\end{equation*}
for all $z\in H^2_{{\frac{1}{a}},0}(0,1)$. Then the norms $|||\cdot |||$ and $||\cdot ||_{2, \sim}$ are equivalent in $H^2_{{\frac{1}{a}},0}(0,1)$. Moreover, for every $\lambda,\mu\in\mathbb{R}$, the variational problem
\begin{equation*}
	\int_0^1z''\varphi''dx+\beta z(1)\varphi(1)+\gamma z'(1)\varphi'(1)=\lambda\varphi(1)+\mu\varphi'(1), \quad \forall \; \varphi \in H^2_{{\frac{1}{a}},0}(0,1),
\end{equation*}
admits a unique solution $z\in H^2_{{\frac{1}{a}},0}(0,1)$ which satisfies the estimates
\begin{equation}\label{eroeferreo}
	\norm{z}^2_{L^2_{\frac{1}{a}}(0, 1)}\le \left(4 C_{HP}+ 1\right)(|\lambda| +|\mu |)^2\,\,\,\,\,\,\text{ and }\,\,\,\,\,\,	|||z|||^2\le (|\lambda| +|\mu |)^2,
\end{equation}
where $C_{HP}$ is the constant introduced in \eqref{CHP}.
In addition $z\in D(A)$ and solves
\begin{equation}\label{falenaferrea}
		\begin{cases}
		Az=0, \\
		\beta z(1)-z'''(1)=\lambda, \\
		\gamma z'(1)+z''(1)=\mu.
	\end{cases}
\end{equation}
\end{Proposition}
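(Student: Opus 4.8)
The plan is to split the statement into three independent tasks: the norm equivalence, the well-posedness of the variational problem together with the two estimates, and the elliptic-regularity step that recovers the strong degenerate boundary-value problem \eqref{falenaferrea}.

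For the norm equivalence I would argue directly. Since $\beta,\gamma\ge 0$, one has trivially $\|z\|_{2,\sim}^2=\int_0^1(z'')^2\,dx\le |||z|||^2$. For the reverse inequality I would control the two boundary terms by $\|z\|_{2,\sim}$, exploiting that $z(0)=z'(0)=0$ for $z\in H^2_{\frac{1}{a},0}(0,1)$: from $z'(1)=\int_0^1 z''\,dx$ and Cauchy--Schwarz we get $(z'(1))^2\le\int_0^1(z'')^2\,dx$, and writing $z(1)=\int_0^1 z'\,dx$ with $z'(x)=\int_0^x z''$ gives $z^2(1)\le\int_0^1(z'')^2\,dx$ as well. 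Hence $|||z|||^2\le(1+\beta+\gamma)\|z\|_{2,\sim}^2$, and combining this with Proposition \ref{norms} shows $|||\cdot|||$, $\|\cdot\|_2$ and $\|\cdot\|_{2,\sim}$ are all equivalent.

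For existence and uniqueness I would invoke the Lax--Milgram theorem (or, since the form is symmetric, the Riesz representation theorem) applied to the bilinear form $B(z,\varphi):=\int_0^1 z''\varphi''\,dx+\beta z(1)\varphi(1)+\gamma z'(1)\varphi'(1)$ and the functional $\varphi\mapsto\lambda\varphi(1)+\mu\varphi'(1)$ on $H^2_{\frac{1}{a},0}(0,1)$. Continuity of both and coercivity of $B$ (note $B(z,z)=|||z|||^2$) follow from the trace bounds just established together with the norm equivalence. To obtain the estimates \eqref{eroeferreo} I would take $\varphi=z$ in the variational identity, giving $|||z|||^2=\lambda z(1)+\mu z'(1)\le(|\lambda|+|\mu|)\,|||z|||$ because $|z(1)|,|z'(1)|\le\|z\|_{2,\sim}\le|||z|||$; this yields the second estimate $|||z|||^2\le(|\lambda|+|\mu|)^2$. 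The first estimate then follows by inserting it into inequality \eqref{x} of Proposition \ref{norms}, since $\|z\|^2_{L^2_{\frac{1}{a}}(0,1)}\le\|z\|_2^2\le(4C_{HP}+1)\|z\|_{2,\sim}^2\le(4C_{HP}+1)|||z|||^2$.

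Finally, to recover \eqref{falenaferrea} I would first test against $\varphi\in\mathcal{C}^\infty_c(0,1)$, for which all boundary terms vanish, obtaining $\int_0^1 z''\varphi''\,dx=0$; this means $z''''=0$ in $\mathcal{D}'(0,1)$, so $z$ coincides on $(0,1)$ with a cubic polynomial. In particular $z''''\equiv 0$ pointwise, whence $az''''=0\in L^2_{\frac{1}{a}}(0,1)$ and $z\in D(A)$ with $Az=0$. Feeding this regularity back through the Gauss--Green formula \eqref{GF1} turns $\int_0^1 z''\varphi''\,dx$ into $-z'''(1)\varphi(1)+z''(1)\varphi'(1)$, and substituting into the variational identity leaves $(\beta z(1)-z'''(1))\varphi(1)+(\gamma z'(1)+z''(1))\varphi'(1)=\lambda\varphi(1)+\mu\varphi'(1)$ for all admissible $\varphi$. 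Choosing test functions realizing the independent boundary data $(\varphi(1),\varphi'(1))=(1,0)$ and $(0,1)$ then separates the two conditions $\beta z(1)-z'''(1)=\lambda$ and $\gamma z'(1)+z''(1)=\mu$. I expect this last step to be the main obstacle: one must justify that the weak equation genuinely places $z$ in $D(A)$ so that \eqref{GF1} applies across the degenerate interval, and that the trace map $\varphi\mapsto(\varphi(1),\varphi'(1))$ is onto $\mathbb{R}^2$ on $H^2_{\frac{1}{a},0}(0,1)$. The degeneracy at $x=0$ is what makes the membership $z\in D(A)$ the point to check carefully, although here it is benign since $z''''$ vanishes identically.
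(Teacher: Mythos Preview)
Your proposal is correct and follows essentially the same route as the paper: the same trace bounds $|z(1)|,|z'(1)|\le\|z\|_{2,\sim}$ for norm equivalence, Lax--Milgram for well-posedness, the choice $\varphi=z$ for the estimates (combined with \eqref{x}), testing against $\mathcal{C}^\infty_c(0,1)$ to obtain $z''''=0$ and hence $z\in D(A)$, and then \eqref{GF1} to extract the boundary conditions. Your version is in fact slightly more careful in two places: you correctly conclude that $z$ is a cubic polynomial (the paper asserts $z''$ is \emph{constant}, which is stronger than what $\int_0^1 z''\varphi''\,dx=0$ yields---it only gives $z''$ affine---though the conclusion $az''''=0$ is unaffected), and you explicitly flag the surjectivity of $\varphi\mapsto(\varphi(1),\varphi'(1))$, which the paper uses implicitly.
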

\begin{proof} 
As a first step observe that for all $u \in H^2_{\frac{1}{a},0}(0,1)$ one has
\begin{equation}\label{u(x)}
	|u(x)|=\biggl |\int_0^xu'(t)dt\biggr |=\biggl |\int_0^x\int_0^tu''(\tau)d\tau\, dt\biggr |\le \|u''\|_{L^2(0,1)}=\|u\|_{2, \sim}
\end{equation}
and
\begin{equation}\label{u'(x)}
	|u'(x)|=\biggl |\int_0^xu''(t)dt\biggr |\le \|u''\|_{L^2(0,1)}=\|u\|_{2, \sim}, 
\end{equation}
for all $x \in [0,1]$.
Thus, $|||\cdot|||$ and $\|\cdot\|_{2, \sim}$ are equivalent. Indeed,
for all $u \in H^2_{\frac{1}{a},0}(0,1)$
\begin{equation}\label{04*}
\|u\|_{2, \sim}^2 = \|u''\|_{L^2(0,1)}^2 \le  |||u|||^2.
\end{equation}
Moreover, \eqref{u(x)} and (\ref{u'(x)}) imply  $\beta u^2(1)\le \beta \|u\|_{2, \sim}^2$ and $\gamma (u'(1))^2\le \gamma \|u\|_{2, \sim}^2$, respectively; hence
$
|||u|||^2\le (1 +\beta +\gamma ) \|u\|_{2, \sim}^2
$
and the claim holds.
Now, consider the bilinear and symmetric form $\Lambda: H^2_{\frac{1}{a},0}(0,1) \times H^2_{\frac{1}{a},0}(0,1) \rightarrow \R$ such that
\[
\Lambda (z, \phi) := \int_0^1 z'' \phi'' dx + \beta z(1)\phi(1)+\gamma z'(1)\phi'(1).
\]
As in \cite{Stability_Genni_Dimitri}, by \eqref{04*} and thanks to \eqref{u(x)} and (\ref{u'(x)}), one can easily prove that $\Lambda$ is coercive and continuous. 
Now, consider the linear functional
\[
\mathcal L( \varphi):= \lambda \varphi(1)+\mu \varphi'(1),
\]
with $\varphi \in H^2_{\frac{1}{a},0}(0,1)$ and $\lambda,\mu\in\mathbb{R}$. Clearly, $\mathcal L$ is continuous and linear. Thus, by the Lax-Milgram Theorem, there exists a unique solution $z \in H^2_{\frac{1}{a},0}(0,1)$ of
\begin{equation}\label{05}
\Lambda (z, \varphi)= \mathcal L (\varphi)
\end{equation}
for all $\varphi \in H^2_{\frac{1}{a},0}(0,1)$.
In particular,
\begin{equation}\label{04}
\int_0^1 (z'')^2dx+ \beta z^2(1)+\gamma (z'(1))^2 =  \ \lambda z(1)+\mu z'(1).
\end{equation}
Concerning the other estimates, by \eqref{u(x)}, \eqref{04*} and \eqref{04}, we have 
$
|||z|||^2 = \lambda z(1)+\mu z'(1) \le (|\lambda| +|\mu |)|||z|||;
$
thus
\begin{equation}\label{02}
|||z|||\le |\lambda| +|\mu |.
\end{equation}
Moreover, by Proposition \ref{norms}, we know that in $ H^2_{\frac{1}{a},0}(0,1)$ the two norms 
$
\|\cdot\|_2$ and $\|\cdot\|_{H^2_{\frac{1}{a},0}(0,1)}$
are equivalent. 
Thus, by \eqref{x},
\[
\begin{aligned}
|||z|||^2 &=  \|z\|_{2, \sim}^2 + \beta z^2(1)+\gamma (z'(1))^2 \ge \|z\|_{2, \sim}^2\\ & \ge \frac{1}{4 C_{HP}+1}\|z\|_2^2 \ge \frac{1}{4 C_{HP}+1}\|z\|^2_{L^2_{\frac{1}{a}}(0,1)}.
\end{aligned}
\]
Thus, by \eqref{02},
$
\|z\|^2_{L^2_{\frac{1}{a}}(0,1)} \le \left(4 C_{HP}+1\right)|||z|||^2 \le \left(4 C_{HP}+ 1\right)(|\lambda| +|\mu |)^2.
$

Now, we will prove that $z$ belongs to $D(A)$ and solves \eqref{falenaferrea}. To this aim,  consider again \eqref{05}; clearly, it holds for every $\varphi \in C_c^\infty(0,1)$, so that 
$
\int_0^1  z''\varphi''dx=0 \mbox{ for all }\varphi \in C_c^\infty(0,1).
$
Thus
$z''$ is constant a.e. in $(0,1)$ (see, e.g., \cite[Lemma 1.2.1]{JLJ}) and so 
$
az'''' =0$ a.e.  in  $(0,1),
$
in particular
$Az= az'''' \in L^2_{\frac{1}{a}}(0,1)$; this implies that $z \in D(A)$.

Now, coming back to \eqref{05} and using \eqref{GF1}, we have
\begin{equation*}
	\begin{aligned}
		&\int_0^1 z''\varphi'' dx+ \beta z(1)\varphi(1)+\gamma z'(1)\varphi'(1)=\lambda \varphi(1)+\mu \varphi'(1)\\ &\Longleftrightarrow -z'''(1)\varphi(1)+(z''\varphi')(1)+ \beta z(1)\varphi(1)+\gamma (z'\varphi')(1) = \lambda \varphi(1)+\mu \varphi'(1)
	\end{aligned}
\end{equation*}
for all $\varphi \in H^2_{\frac{1}{a},0}(0,1)$. Thus,
$
-z'''(1) + \beta z(1)=\lambda$ and $\gamma z'(1)+z''(1)=\mu,
$
that is $z$ solves \eqref{falenaferrea}.
\end{proof}
Observe that if $\beta= 0$ and $\gamma =0$, the norms $|||\cdot|||$ and $\|\cdot \|_{2, \sim}$ coincide and the proof of the estimates given in \eqref{eroeferreo} is simpler.

\section{The stability theorem}\label{Section 3}
In this section we prove the main result of the paper; in particular, we will prove that the energy associated to \eqref{(P)} decreases exponentially under suitable assumptions.
As a first step we will give the next definition. 

\begin{Definition}
Let $y$ be a mild solution of (\ref{(P)}) and define its energy  as
\begin{equation*}
	E_y(t):=\frac{1}{2}\int_0^1 \Biggl (\frac{y^2_t(t,x)}{a(x)}+y^2_{xx}(t,x) \Biggr )dx+\frac{\beta}{2}y^2(t,1)+\frac{\gamma}{2}y_x^2(t,1),\quad\,\,\,\,\,\,\forall\;t\ge 0,
\end{equation*}
\end{Definition}
where we recall that $\beta,\gamma \ge 0$. Thus, if $\beta,\gamma \neq 0$ and $y$ is a mild solution, then
\[
y^2(t,1) \le \frac{2}{\beta}E_y(t)
\quad \text{and} \quad y_x^2(t,1) \le \frac{2}{\gamma}E_y(t);
\]
on the other hand, for all $\beta \ge 0$ and $\gamma \ge 0$, one can conclude that
\[
y^2(t,1) \le  2E_y(t) \quad \text{and} \quad y_x^2(t,1) \le  2E_y(t), 
\]
thanks to \eqref{u(x)} and (\ref{u'(x)}), respectively. Thus, we have
\begin{equation}\label{stimapun3}
y^2(t,1) \le C_\beta E_y(t) \quad  \text{and} \quad y_x^2(t,1) \le C_\gamma E_y(t),
\end{equation}
where
$
C_\beta:= \begin{cases}
\ds\min\left \{2,\frac{2}{\beta}\right\}, &\beta \neq 0,\\
2, &\beta=0
\end{cases}\;
\text{and}\;
C_\gamma:= \begin{cases}
\ds\min\left \{2,\frac{2}{\gamma}\right\}, &\gamma \neq 0,\\
2, &\gamma=0
\end{cases}$.

Now we prove that the energy is non increasing.

\begin{Theorem}\label{teorema energia decr}
	 Assume $a$ (WD) or (SD) and let $y$ be a  classical solution of (\ref{(P)}). Then the energy is non increasing. In particular,
	\[
		\frac{dE_y(t)}{dt}=-y^2_t(t,1)-y^2_{tx}(t,1),\quad\,\,\,\,\,\,\,\forall\;t\ge 0.
	\]
\end{Theorem}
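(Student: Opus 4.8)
The plan is to compute the time derivative of the energy directly and then use the PDE together with the boundary conditions to cancel the interior terms. First I would differentiate $E_y(t)$ under the integral sign, which is justified because $y$ is a classical solution (so $y_{tt}$, $y_{xx}$ and the relevant traces are continuous in $t$). This gives
\[
\frac{dE_y(t)}{dt}=\int_0^1\left(\frac{y_t y_{tt}}{a}+y_{xx}y_{txx}\right)dx+\beta\, y(t,1)y_t(t,1)+\gamma\, y_x(t,1)y_{tx}(t,1).
\]
In the first integral I would use the equation $y_{tt}=-a\,y_{xxxx}$ of \eqref{(P)} to rewrite $\frac{y_t y_{tt}}{a}=-y_t\,y_{xxxx}$, so that the weight $\frac1a$ disappears and the term becomes $-\int_0^1 y_{xxxx}\,y_t\,dx$.

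The key step is then to integrate by parts in $x$ using the Gauss–Green formula \eqref{GF1}, applied with $u=y(t,\cdot)$ and $v=y_t(t,\cdot)$ (for fixed $t$ both lie in the spaces required there, since a classical solution satisfies $y(t,\cdot)\in D(A)$ and $y_t(t,\cdot)\in H^2_{\frac{1}{a},0}(0,1)$). This yields
\[
-\int_0^1 y_{xxxx}\,y_t\,dx=-y_{xxx}(t,1)y_t(t,1)+y_{xx}(t,1)y_{txx}(t,1)-\int_0^1 y_{xx}\,y_{txx}\,dx.
\]
The interior term $-\int_0^1 y_{xx}y_{txx}\,dx$ exactly cancels the $\int_0^1 y_{xx}y_{txx}\,dx$ coming from differentiating the energy, and the boundary contributions at $x=0$ vanish automatically because the clamped conditions $y(t,0)=y_x(t,0)=0$ force $y_t(t,0)=y_{tx}(t,0)=0$. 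Note also that $y_{txx}(t,1)=y_{tx}(t,1)'$ matches the form appearing in the boundary term, so I would keep careful track that the term written as $y_{xx}(t,1)y_{txx}(t,1)$ pairs correctly with the rotational boundary data.

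After the cancellation only boundary terms at $x=1$ survive, namely
\[
\frac{dE_y(t)}{dt}=-y_{xxx}(t,1)y_t(t,1)+y_{xx}(t,1)y_{tx}(t,1)+\beta\, y(t,1)y_t(t,1)+\gamma\, y_x(t,1)y_{tx}(t,1).
\]
Finally I would substitute the two dissipative boundary conditions at $x=1$ from \eqref{(P)}: the third line gives $\beta y(t,1)-y_{xxx}(t,1)=-y_t(t,1)$, and the fourth line gives $\gamma y_x(t,1)+y_{xx}(t,1)=-y_{tx}(t,1)$. Grouping the terms so that the coefficients of $y_t(t,1)$ and $y_{tx}(t,1)$ appear, these substitutions collapse the expression to $-y_t^2(t,1)-y_{tx}^2(t,1)$, which is the claimed identity and is manifestly nonpositive, so $E_y$ is non increasing. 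The only mild obstacle is ensuring the differentiation under the integral and the boundary traces are legitimate, but this is guaranteed by the classical-solution regularity in Theorem \ref{Theorem regol}; the algebra is then a direct bookkeeping matching of the boundary terms against the two dissipative conditions.
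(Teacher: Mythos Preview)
Your proof is correct and follows essentially the same route as the paper: both differentiate the energy (equivalently, multiply the equation by $y_t/a$ and integrate), apply the Gauss--Green formula \eqref{GF1} with $(u,v)=(y(t,\cdot),y_t(t,\cdot))$, and then substitute the dissipative boundary conditions at $x=1$. The only slip is in your intermediate display, where the boundary term should read $y_{xx}(t,1)\,y_{tx}(t,1)$ rather than $y_{xx}(t,1)\,y_{txx}(t,1)$ (formula \eqref{GF1} gives $-u''(1)v'(1)$, and here $v'=y_{tx}$); you silently correct this in the next displayed line, so the argument stands.
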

\begin{proof}
Multiplying the equation
$
		y_{tt}+Ay=0
$
by $\ds\frac{y_t}{a}$ and integrating over $(0,1)$, we have
$\ds
0=\frac{1}{2}\int_0^1 \Bigl (\frac{y^2_t}{a}\Bigr )_tdx+\int_0^1 y_{xxxx}\,y_tdx.
$
Using the boundary conditions and \eqref{GF1}, one has 
\begin{equation*}
	\begin{aligned}
		0&=\frac{1}{2}\int_0^1 \Bigl (\frac{y^2_t}{a}\Bigr )_tdx+y_{xxx}(t,1)y_t(t,1)-y_{tx}(t,1)y_{xx}(t,1) +\int_0^1y_{xx}y_{txx}dx\\
		&=\frac{1}{2}\frac{d}{dt}\Biggl (\int_0^1\Bigl (\frac{y^2_t}{a}+y^2_{xx}\Bigr )dx+\beta y^2(t,1)+\gamma y_x^2(t,1)\Biggr )+y^2_t(t,1)+y_{tx}^2(t,1)\\ &=\frac{d}{dt}E_y(t)+y^2_t(t,1)+y_{tx}^2(t,1).
	\end{aligned}
\end{equation*}
Hence $\ds
	\frac{d}{dt}E_y(t)=-y^2_t(t,1)-y^2_{tx}(t,1)\le 0$ $\forall\; t\ge 0
$
and, consequently, the energy $E_y$ associated to $y$ is non increasing.
\end{proof}

In the next propositions we establish some equalities important to obtain the stability result. In this sense the following lemma is crucial.
\begin{Lemma}\label{Lemma_bordo}
	Assume Hypothesis \ref{ipo1}. 
	\begin{enumerate}
		\item
		If   $y \in H^1_{\frac{1}{a},0}(0,1)$, then   $\ds
		\lim_{x\rightarrow 0} \frac{x}{a}y^2(x)=0.$
		\item If $a$ is (SD) and $y\in D(A)$, then
		$
		y''\in W^{1,1}(0,1).
		$
	\end{enumerate}
\end{Lemma}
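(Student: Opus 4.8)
The plan is to treat the two assertions separately, in both cases exploiting the boundary condition $y(0)=0$ (and $y'(0)=0$ for the second part) together with the growth control on $1/a$. Since $a$ is (WD) or (SD) we have $K\in(0,2)$, so, as recalled after Proposition \ref{norms}, $\lim_{x\to 0}x^{\gamma}/a(x)=0$ for every $\gamma>K$; I will use this with $\gamma=2$, as well as the resulting integrability $\int_0^{\delta}(t^2/a)\,dt<\infty$ for small $\delta$.

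For part (1), I would start from $y(0)=0$ and write $y(x)=\int_0^x y'(t)\,dt$, so that Cauchy--Schwarz gives $y^2(x)\le x\int_0^x (y')^2\,dt$. Multiplying by $x/a(x)$ yields
\[
\frac{x}{a(x)}\,y^2(x)\le \frac{x^2}{a(x)}\int_0^x (y'(t))^2\,dt .
\]
Here $\int_0^x (y')^2\,dt\to 0$ as $x\to 0$ because $(y')^2\in L^1(0,1)$, while $x^2/a(x)\to 0$ (in particular it is bounded near $0$) since $2>K$. Letting $x\to 0$ gives the claim.

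For part (2), recall that $y\in D(A)$ means $y\in H^2_{\frac{1}{a},0}(0,1)$ with $ay''''\in L^2_{\frac{1}{a}}(0,1)$, i.e. $\int_0^1 a\,(y'''')^2\,dx<\infty$. Away from the degeneracy $a$ is bounded below, so for every $\delta\in(0,1)$ one has $y''''\in L^2(\delta,1)$, hence $y'''\in H^1(\delta,1)$; thus $y'''$ is locally absolutely continuous on $(0,1)$ and is the a.e.\ derivative of $y''$ there. To upgrade this to $y''\in W^{1,1}(0,1)$ it suffices to prove $y'''\in L^1(0,1)$, and the only issue is near $x=0$. I would fix $\delta$, write $y'''(x)=y'''(\delta)-\int_x^{\delta} y''''(t)\,dt$, integrate in $x$ over $(0,\delta)$ and apply Fubini to obtain
\[
\int_0^{\delta}|y'''(x)|\,dx\le \delta\,|y'''(\delta)|+\int_0^{\delta} t\,|y''''(t)|\,dt .
\]
The last integral is then controlled by Cauchy--Schwarz, $\int_0^{\delta} t|y''''|\,dt\le \big(\int_0^{\delta}(t^2/a)\,dt\big)^{1/2}\big(\int_0^{\delta} a\,(y'''')^2\,dt\big)^{1/2}$, where the second factor is finite by hypothesis and the first is finite because $t^2/a(t)=t^{2-K}\,(t^K/a(t))$ with $2-K>-1$ and $t^K/a(t)$ bounded near $0$. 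Hence $y'''\in L^1(0,\delta)$, and together with $y'''\in L^1(\delta,1)$ this gives $y'''\in L^1(0,1)$ and therefore $y''\in W^{1,1}(0,1)$.

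The main obstacle is genuinely the behaviour at the degenerate endpoint $x=0$: since $K\ge 1$ in the (SD) case, $1/a$ need not be integrable near $0$, so $y''''$ itself is in general \emph{not} in $L^1(0,1)$ and cannot be integrated directly to recover $y'''$. The device that saves the argument is the extra factor $t$ produced by the Fubini step, which turns the non-integrable weight $1/a$ into the integrable weight $t^2/a$; this is exactly where the constraint $K<2$ (guaranteed by (SD)) enters. A secondary, routine point to verify is that the a.e.\ derivative $y'''$ obtained on each $(\delta,1)$ is the distributional derivative of $y''$ on all of $(0,1)$, which is immediate since every test function has compact support in $(0,1)$.
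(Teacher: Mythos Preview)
The paper does not give its own proof of this lemma: immediately after the statement it refers the reader to \cite[Lemma~3.2.5]{BFM wave eq} for part~(1) and to \cite[Proposition~3.2]{CF_Neumann} (see also \cite[Lemma~3.1]{CF_Beam}) for part~(2). So there is nothing in the paper to compare your argument against, and your proof stands on its own; both parts are correct.

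One small remark on part~(1): the lemma is stated under Hypothesis~\ref{ipo1} alone, which does not by itself force $K<2$, whereas your bound $\frac{x}{a}y^2(x)\le \frac{x^2}{a(x)}\int_0^x(y')^2$ relies on $x^2/a(x)$ remaining bounded near $0$, and you obtain this from $K<2$. In the context of the paper this is harmless, since (WD) or (SD) is assumed throughout and the lemma is only applied in that setting; but strictly speaking your proof, as written, covers a slightly narrower hypothesis than the one stated. For part~(2) your argument is exactly right: the Fubini step that converts $\int_0^\delta\!\int_x^\delta|y''''|\,dt\,dx$ into $\int_0^\delta t\,|y''''|\,dt$, followed by Cauchy--Schwarz against the integrable weight $t^2/a$, is precisely the mechanism that yields $y'''\in L^1(0,1)$ even though $y''''$ itself need not be integrable near $0$ when $K\ge 1$.
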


The previous results are proved in \cite[Lemma 3.2.5]{BFM wave eq} and \cite[Proposition 3.2]{CF_Neumann} (see also \cite[Lemma 3.1]{CF_Beam}), respectively.
We underline that the first point of Lemma  \ref{Lemma_bordo} is proved in \cite[Lemma 3.2.5]{BFM wave eq} for all $y \in L^2 _{\frac{1}{a}}(0,1)\cap H^1_0(0,1)$, however the proof holds also if one takes $y \in H^1_{\frac{1}{a},0}(0,1)$ as in this case. Moreover, the second point of the previous lemma is clearly satisfied if $a$ is (WD); indeed, in this case if $y \in D(A)$, then $y \in W^{4,1}(0,1)$ (see \cite{CF_Neumann} for more details).
\begin{Proposition}\label{Prop 3.1}
	Assume  $a$  (WD)  or (SD). If $y$ is a classical solution of (\ref{(P)}), then 
	\begin{equation}\label{prima uguaglianza}
		\begin{aligned}
			0&=2\int_0^1\Bigl [y_t\frac{x}{a}y_x \Bigr ]^{t=T}_{t=s}dx-\frac{1}{a(1)}\int_s^Ty^2_t(t,1)dt+\int_{Q_s}\frac{y^2_t}{a}\Bigl (1-\frac{xa'}{a}\Bigr )dx\,dt\\
			&+3\int_{Q_s} y_{xx}^2dx\,dt+2\beta\int_s^Ty_x(t,1)y(t,1)dt+2\int_s^Ty_x(t,1)y_t(t,1)dt\\
			&+2\gamma\int_s^Ty_x^2(t,1)dt+2\int_s^Ty_x(t,1)y_{tx}(t,1)dt  -  \int_s^T y_{xx}^2(t,1)dt
		\end{aligned}
	\end{equation}
for every $0<s<T$. Here $Q_s:=(s,T)\times (0,1)$.
\end{Proposition}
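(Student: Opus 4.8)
The plan is to establish \eqref{prima uguaglianza} as a multiplier identity, obtained by multiplying the equation $y_{tt}+ay_{xxxx}=0$ by $\frac{2x}{a}y_x$ and integrating over $Q_s$. Since the left-hand side then vanishes identically, it suffices to compute the two resulting integrals $2\int_{Q_s}\frac{x}{a}y_x\,y_{tt}\,dx\,dt$ and $2\int_{Q_s}x\,y_x\,y_{xxxx}\,dx\,dt$ and to verify that their sum reproduces the nine terms on the right of \eqref{prima uguaglianza}.

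For the temporal integral I would first integrate by parts in $t$: since $\frac{x}{a}$ does not depend on $t$, $\int_s^T y_{tt}y_x\,dt=[y_ty_x]_{t=s}^{t=T}-\int_s^T y_ty_{tx}\,dt$, which yields the boundary contribution $2\int_0^1\bigl[y_t\frac{x}{a}y_x\bigr]_{t=s}^{t=T}dx$. The remaining term $-2\int_{Q_s}\frac{x}{a}y_ty_{tx}\,dx\,dt=-\int_{Q_s}\frac{x}{a}\partial_x(y_t^2)\,dx\,dt$ is then integrated by parts in $x$. Here $\bigl(\frac{x}{a}\bigr)'=\frac{1}{a}\bigl(1-\frac{xa'}{a}\bigr)$ produces $\int_{Q_s}\frac{y_t^2}{a}\bigl(1-\frac{xa'}{a}\bigr)dx\,dt$, the evaluation at $x=1$ gives $-\frac{1}{a(1)}\int_s^T y_t^2(t,1)\,dt$, and the evaluation at $x=0$ vanishes because $y_t(t,\cdot)\in H^1_{\frac{1}{a},0}(0,1)$, so Lemma \ref{Lemma_bordo}(1) applied to $y_t$ gives $\lim_{x\to0}\frac{x}{a}y_t^2=0$. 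These three pieces are exactly the first line of \eqref{prima uguaglianza}.

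For the spatial integral I would use the Gauss--Green formula \eqref{GF1} with $v:=xy_x$, after checking that $xy_x\in H^2_{\frac{1}{a},0}(0,1)$ (it vanishes together with its first derivative at $0$, and the weighted integrability follows from $y_x=O(x)$ near $0$ and the degeneracy rate $K<2$). This gives $\int_0^1 xy_x y_{xxxx}\,dx = y_x(1)y_{xxx}(1)-y_x(1)y_{xx}(1)-(y_{xx}(1))^2+2\int_0^1 y_{xx}^2\,dx+\int_0^1 xy_{xx}y_{xxx}\,dx$, and one further elementary integration by parts, $\int_0^1 xy_{xx}y_{xxx}\,dx=\frac{1}{2}(y_{xx}(1))^2-\frac{1}{2}\int_0^1 y_{xx}^2\,dx$, collapses this to $y_x(1)y_{xxx}(1)-y_x(1)y_{xx}(1)+\frac{3}{2}\int_0^1 y_{xx}^2\,dx-\frac{1}{2}(y_{xx}(1))^2$. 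Multiplying by $2$ and integrating in $t$ reproduces $3\int_{Q_s}y_{xx}^2\,dx\,dt$, the term $-\int_s^T y_{xx}^2(t,1)\,dt$, and the two boundary products $2\int_s^T y_x(t,1)y_{xxx}(t,1)\,dt$ and $-2\int_s^T y_x(t,1)y_{xx}(t,1)\,dt$.

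Finally I would eliminate the third- and second-order traces at $x=1$ using the dissipative boundary conditions of \eqref{(P)}, namely $y_{xxx}(t,1)=\beta y(t,1)+y_t(t,1)$ and $y_{xx}(t,1)=-\gamma y_x(t,1)-y_{tx}(t,1)$. Substituting these turns $2\int_s^T y_x(t,1)y_{xxx}(t,1)\,dt$ into $2\beta\int_s^T y_x(t,1)y(t,1)\,dt+2\int_s^T y_x(t,1)y_t(t,1)\,dt$ and $-2\int_s^T y_x(t,1)y_{xx}(t,1)\,dt$ into $2\gamma\int_s^T y_x^2(t,1)\,dt+2\int_s^T y_x(t,1)y_{tx}(t,1)\,dt$, giving the remaining four boundary terms of \eqref{prima uguaglianza}. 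The main obstacle is the rigorous justification of the manipulations at the degenerate endpoint $x=0$: this is where Lemma \ref{Lemma_bordo} is indispensable, point (1) for the temporal term and point (2), i.e.\ $y''\in W^{1,1}(0,1)$, together with formula \eqref{GF1}, for the spatial term in the strongly degenerate case, where $y_{xxxx}$ need not be integrable on its own.
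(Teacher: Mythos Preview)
Your multiplier strategy and the treatment of the temporal integral match the paper exactly, including the appeal to Lemma~\ref{Lemma_bordo}(1) for the vanishing of $\tfrac{x}{a}y_t^2$ at $x=0$. The divergence from the paper is in the spatial integral $\int_{Q_s} xy_x\,y_{xxxx}\,dx\,dt$.

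There you claim $xy_x\in H^2_{\frac{1}{a},0}(0,1)$ and apply \eqref{GF1} in both the (WD) and (SD) cases. The paper does this only when $a$ is (WD), where $y\in\mathcal{C}^3[0,1]$ makes the $H^2$-membership immediate. In the (SD) case your parenthetical justification is incomplete: you verify the weighted $L^2_{\frac{1}{a}}$-condition and the vanishing of $xy_x$ and $(xy_x)'$ at $0$, but you do not check the key $H^2$-requirement, namely that $(xy_x)''=2y_{xx}+xy_{xxx}\in L^2(0,1)$. Lemma~\ref{Lemma_bordo}(2), which you invoke, only gives $y_{xxx}\in L^1(0,1)$; this does not imply $xy_{xxx}\in L^2(0,1)$. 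The claim is in fact true---from $\sqrt{a}\,y_{xxxx}\in L^2$ and $\tfrac{1}{a(s)}\le\tfrac{1}{a(1)}s^{-K}$ one gets $\int_0^1 x^2\bigl(\int_x^1|y_{xxxx}|\,ds\bigr)^2dx\lesssim\int_0^1 x^{3-K}\,dx<\infty$ since $K<2$---but this is exactly the kind of estimate the paper performs, just packaged differently. The paper instead bypasses the membership claim: it restricts to $[\delta,1]$, integrates by parts there, and shows by hand that the boundary contributions $(xy_xy_{xxx})(t,\delta)$, $(y_xy_{xx})(t,\delta)$, $(xy_{xx}^2)(t,\delta)$ all vanish as $\delta\to0$, the first via a careful study of $\delta\,y_{xxx}(t,\delta)$.

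So your route through \eqref{GF1} is more economical and ultimately legitimate, but as written it has a gap: you must supply the $L^2$-bound on $xy_{xxx}$ in the (SD) case before \eqref{GF1} can be invoked with $v=xy_x$.
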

\begin{proof}
	Fix $s\in (0,T)$. Multiplying the equation in (\ref{(P)}) by $\displaystyle\frac{xy_x}{a}$ and integrating over $Q_s$, we have 
	\begin{equation}\label{int parti}
		0=\int_{Q_s}y_{tt}\frac{xy_x}{a}dx\,dt+\int_{Q_s}ay_{xxxx}\frac{xy_x}{a}dx\,dt.
	\end{equation}
Clearly the previous integrals are well defined; indeed 
$\ds 
\left|y_{tt}\frac{xy_x}{a}\right|=\left|\frac{y_{tt}}{\sqrt{a}}\frac{x}{\sqrt{a}}y_x\right| $ $\ds \le \frac{1}{a(1)}\left|\frac{y_{tt}}{\sqrt{a}}\right||y_x|\in L^1(0,1)$ and 
$
\ds \left|ay_{xxxx}\frac{xy_x}{a}\right|=\left|\frac{Ay}{\sqrt{a}}\frac{x}{\sqrt{a}}y_x\right|$ $\ds \le \frac{1}{a(1)} \left|\frac{Ay}{\sqrt{a}}\right||y_x|\in L^1(0,1).
$
Integrating by parts the first integral in (\ref{int parti}) and observing that 
$
	\lim_{x\to 0}\frac{x}{a(x)}y^2_t(t,x)=0
$
(Lemma \ref{Lemma_bordo}.1),
we obtain
\begin{equation}\label{intparti1}
\begin{aligned}
&\int_{Q_s}y_{tt}\frac{xy_x}{a}dx\,dt
			=\int_0^1\Bigl [y_t\frac{xy_x}{a} \Bigr ]^{t=T}_{t=s}dx-\frac{1}{2}\int_s^T\Bigl [\frac{x}{a}y_t^2\Bigr ]^{x=1}_{x=0}dt+\frac{1}{2}\int_{Q_s}\Bigl (\frac{x}{a}\Bigr )'y^2_tdx\,dt\\
			&=\int_0^1\Bigl [y_t\frac{xy_x}{a} \Bigr ]^{t=T}_{t=s}dx-\frac{1}{2a(1)}\int_s^T y_t^2(t,1)dt+\frac{1}{2}\int_{Q_s}\frac{y^2_t}{a}\Bigl (1-\frac{xa'}{a}\Bigr )dx\,dt.
		\end{aligned}
\end{equation}
Now, consider the second integral in \eqref{int parti}. We will distinguish between the weakly degenerate case and the strongly one.

 \underline{If $a$ is (WD):} in this case $xy_x \in H^2_{\frac{1}{a},0}(0,1)$. Indeed, one can prove that $xy_x \in L^2_{\frac{1}{a}}(0,1)$ and, as in \cite{CF_Neumann}, one can prove that if $y \in D(A)$, then $y \in \mathcal C^3[0,1]$; thus $xy_x \in H^2(0,1)$.  Moreover, $(xy_x)(t, 0)=0=(xy_x)_x(t, 0).$ Thus, we can apply Lemma \ref{Green}, obtaining
\begin{equation}\label{intparti2}
		\begin{aligned}
		&	\int_{Q_s}xy_xy_{xxxx}dx\,dt=\int_s^Ty_x(t,1)y_{xxx}(t,1)dt-\int_s^Ty_{xx}(t,1)(xy_x)_x(t,1)dt\\ &+\int_{Q_s}(xy_x)_{xx}y_{xx}dx\,dt\\
		&=\int_s^Ty_x(t,1)y_{xxx}(t,1)dt-\int_s^Ty_x(t,1)y_{xx}(t,1)dt	-\frac{1}{2}\int_s^Ty_{xx}^2(t,1)dt\\
		&+ \frac{3}{2}\int_{Q_s} y_{xx}^2dx\,dt\\
		&=\int_s^Ty_x(t,1)[\beta y(t,1)+y_t(t,1)]dt-\int_s^Ty_x(t,1)[-\gamma y_x(t,1)-y_{tx}(t,1)]dt\\
		&-\frac{1}{2}\int_s^Ty_{xx}^2(t,1)dt+ \frac{3}{2}\int_{Q_s} y_{xx}^2dx\,dt.
		\end{aligned}
	\end{equation}
\underline{If $a$ is (SD):} fix $\delta \in (0, 1)$; then, using the boundary conditions, one has
\begin{equation}\label{intparti3}
		\begin{aligned}
&\int_s^T\int_\delta^1 xy_xy_{xxxx}dx\,dt= \int_s^T [xy_xy_{xxx}]_{x=\delta}^{x=1}dt - \int_s^T [(xy_x)_xy_{xx}]_{x=\delta}^{x=1}dt \\
&+ \int_s^T\int_\delta^1 (xy_x)_{xx}y_{xx}dxdt
\\&= \int_s^T (xy_xy_{xxx})(t,1)dt -  \int_s^T (xy_xy_{xxx})(t, \delta)dt- \int_s^T ((xy_x)_xy_{xx})(t,1)dt \\
&+ \int_s^T ((xy_x)_xy_{xx})(t, \delta)dt  + \int_s^T\int_\delta^1 (xy_x)_{xx}y_{xx}dxdt\\
&=\int_s^T (y_xy_{xxx})(t,1)dt\! -  \!\int_s^T (xy_xy_{xxx})(t, \delta)dt\!-\!\int_s^T(y_xy_{xx})(t,1)dt\!-\!  \int_s^T y_{xx}^2(t,1)dt \\
&+ \int_s^T (y_xy_{xx})(t, \delta) dt + \int_s^T(xy_{xx}^2)(t, \delta)dt + \int_s^T\int_\delta^1 (xy_x)_{xx}y_{xx}dxdt.
\end{aligned}
	\end{equation}
By Lemma \ref{Lemma_bordo}, we know that $y_{xx} \in W^{1,1}(0,1)$; thus
$
\lim_{\delta \rightarrow 0} (xy_{xx}^2)(t, \delta) =0
$
and
$
\lim_{\delta \rightarrow 0}  (y_xy_{xx})(t, \delta) =0,
$
being $y_x(t,0)=0$.

Now, we will prove that $\lim_{\delta \rightarrow 0}   (xy_xy_{xxx})(t, \delta)=0$. To this aim,
 it is sufficient to prove that $\displaystyle\exists\lim_{\delta\to 0}\delta y_{xxx}(t,\delta)\in\mathbb{R}$. Thus, we rewrite $\delta y_{xxx}(t,\delta)$ as
\begin{equation}\label{Koraidon}
\begin{aligned}
	\delta y_{xxx}(t,\delta)&=y_{xxx}(t,1)-\int_\delta^1(xy_{xxx}(t,x))_xdx\\ &=y_{xxx}(t,1)-\int_\delta^1y_{xxx}(t,x)dx-\int_\delta^1xy_{xxxx}(t,x)dx.
	\end{aligned}
\end{equation}
Note that 
$ |xy_{xxxx}(t, x)|\!=\!\left|\sqrt{a(x)}y_{xxxx}(t, x)\frac{x}{\sqrt{a(x)}}\right|$ $ \le \!$ $\frac{1}{a(1)} \sqrt{a(x)}|y_{xxxx}(t, x)| \!\in \! L^1(0,1).$ Hence, by the absolute continuity of the integral
\begin{equation}\label{3.26_0}
\displaystyle\lim_{\delta\to 0}\int_\delta^1 xy_{xxxx}(t,x)dx = \int_0^1 xy_{xxxx}(t,x)dx.
\end{equation}
On the other hand,
\begin{equation}\label{3.26_1}
\begin{aligned}
	\int_\delta^1y_{xxx}(t,x)dx&=\int_\delta^1\Biggl (y_{xxx}(t,1)-\int_x^1y_{xxxx}(t,s)ds \Biggr )dx\\
	&=(1-\delta)y_{xxx}(t,1)-\int_\delta^1\int_x^1y_{xxxx}(t,s)ds\,dx.
\end{aligned}
\end{equation}
Now, we estimate the last term in the previous equation
\[
\begin{aligned}
	\int_\delta^1\int_x^1y_{xxxx}(t,s)ds\,dx&=\int_\delta ^1 \int_\delta ^s y_{xxxx}(t,s)dxds=\int_\delta^1y_{xxxx}(t,s)(s-\delta)ds\\
	&=\int_\delta^1sy_{xxxx}(t,s)ds-\delta \int_\delta^1y_{xxxx}(t,s)ds.
\end{aligned}
\]
As before,  $\displaystyle\lim_{\delta\to 0}\int_\delta^1 sy_{xxxx}(t,s)ds = \int_0^1 sy_{xxxx}(t,s)ds$.
Moreover, as in \cite[Proof of Theorem 3.2]{CF_Beam}, one has
$
		0<\delta \int_\delta^1|y_{xxxx}(t,s)|ds$ $\le$ $C\delta^{1- \frac{K}{2}}(1-\delta)^{\frac{1}{2}}\Vert\sqrt{a}y_{xxxx}\Vert_{L^2(0,1)},
$ for a positive constant $C$. Thus, since $K<2$, it follows that $\displaystyle\lim_{\delta\to 0}\delta \int_\delta^1y_{xxxx}(t,s)ds= 0$. Consequently, 
\[
		\lim_{\delta\to 0}	\int_\delta^1\int_x^1y_{xxxx}(t,s)ds\,dx=	\lim_{\delta\to 0}	\int_\delta^1y_{xxxx}(t,s)(s-\delta)ds=int_0^1sy_{xxxx}(t,s)ds
\]
and, by \eqref{3.26_1},
\begin{equation}\label{3.26_2}
		\lim_{\delta\to 0}	\int_\delta^1y_{xxx}(t,x)dx = 	y_{xxx}(t,1)- 	\int_0^1sy_{xxxx}(t,s)ds.
\end{equation}
As a consequence, by \eqref{3.26_0} and \eqref{3.26_2}, coming back to (\ref{Koraidon}), one has that there exists $\lim_{\delta\to 0}\delta y_{xxx}(t,\delta) \in \R
$
and, in particular,
$
\lim_{\delta \rightarrow 0} (xy_{xxx})(t, \delta)=\lim_{\delta \rightarrow 0} (xy_xy_{xxx})(t, \delta)$
$=0$.
Now, consider  the term
$\int_\delta^1 (xy_x)_{xx}y_{xx}dx$.
Clearly,
\[
\begin{aligned}
\int_\delta^1 (xy_x)_{xx}y_{xx}dx &= 2\int_\delta^1 y_{xx}^2dx + \frac{1}{2}\int_\delta^1 x(y_{xx}^2)_xdx\\
&=2\int_\delta^1 y_{xx}^2dx  + \frac{1}{2} (y_{xx}^2)(t,1) - \frac{1}{2} (xy_{xx}^2)(t,\delta)  -  \frac{1}{2}\int_\delta^1 y_{xx}^2 dx.
\end{aligned}
\]
By the absolute continuity of the integral and the boundary conditions, one has
$
\lim_{\delta\to 0}\int_\delta^1 (xy_x)_{xx}y_{xx}dx = \frac{3}{2}\int_0^1 y_{xx}^2dx  + \frac{1}{2} y_{xx}^2(t,1).
$
Thus, by \eqref{intparti3}, the previous limits and the boundary conditions, one has the following equality
\begin{equation}\label{intparti4}
\begin{aligned}
&	\int_{Q_s} xy_xy_{xxxx}dx\,dt =\int_s^Ty_x(t,1)[\beta y(t,1)+y_t(t,1)]dt\\
&-\int_s^Ty_x(t,1)[-\gamma y_x(t,1)-y_{tx}(t,1)]dt-\frac{1}{2}\int_s^Ty_{xx}^2(t,1)dt+ \frac{3}{2}\int_{Q_s} y_{xx}^2dx\,dt.
\end{aligned}
\end{equation}
also if $a$ is (SD).
Hence, by \eqref{int parti}, \eqref{intparti1}, \eqref{intparti2} and \eqref{intparti4}, one has
	\[
		\begin{aligned}
			0&=\int_0^1\Bigl [y_t\frac{xy_x}{a} \Bigr ]^{t=T}_{t=s}dx-\frac{1}{2a(1)}\int_s^T y_t^2(t,1)dt+\frac{1}{2}\int_{Q_s}\frac{y^2_t}{a}\Bigl (1-\frac{xa'}{a}\Bigr )dx\,dt\\
			&+\frac{3}{2}\int_{Q_s} y_{xx}^2dxdt+\beta\int_s^Ty_x(t,1)y(t,1)dt+\int_s^Ty_x(t,1)y_t(t,1)dt\\
			&+\gamma\int_s^Ty_x^2(t,1)dt+\int_s^Ty_x(t,1)y_{tx}(t,1)dt  -\frac{1}{2}   \int_s^T y_{xx}^2(t,1)dt.
		\end{aligned}
	\]
Multiplying the previous equality by $2$ we have the thesis.
\end{proof}

As a consequence of the previous equality, we have the next relation. 
\begin{Proposition}
	Assume  $a$  (WD)  or (SD). If $y$ is a classical solution of \eqref{(P)}, then for every $0<s<T$ we have 
\begin{equation}\label{equazione 2}
\int_{Q_s}\frac{y^2_t}{a}\Bigl (\frac{K}{2}+1-\frac{xa'}{a}\Bigr )dx\,dt+\int_{Q_s}y^2_{xx}\Bigl (3-\frac{K}{2}\Bigr )dx\,dt
	=(B.T.),
\end{equation}
where 
\[
\begin{aligned}
(B.T.)&= \frac{K}{2}\int_0^1\Bigl [\frac{yy_t}{a} \Bigr ]^{t=T}_{t=s}dx-2\int_0^1\Bigl [y_t\frac{x}{a}y_x \Bigr ]^{t=T}_{t=s}dx+\frac{K\beta}{2}\int_s^Ty^2(t,1)dt\\
	&+\frac{K}{2}\int_s^Ty(t,1)y_t(t,1)dt+\gamma\Bigl (\frac{K}{2}-2\Bigr )\int_s^Ty_x^2(t,1)dt\\ &+\Bigl (\frac{K}{2}-2\Bigr )\int_s^Ty_x(t,1)y_{tx}(t,1)dt+\int_s^T\frac{y_t^2(t,1)}{a(1)}dt-2\beta\int_s^Ty_x(t,1)y(t,1)dt\\
	&-2\int_s^Ty_x(t,1)y_t(t,1)dt+\int_s^Ty_{xx}^2(t,1)dt.
\end{aligned}
\]
\end{Proposition}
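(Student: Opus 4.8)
The plan is to complement the identity \eqref{prima uguaglianza}, obtained with the multiplier $\frac{xy_x}{a}$, by a second integral identity produced with the multiplier $\frac{y}{a}$, and then to form the linear combination of the two in which the weights in front of the interior integrals $\int_{Q_s}\frac{y_t^2}{a}\,dx\,dt$ and $\int_{Q_s}y_{xx}^2\,dx\,dt$ become exactly those appearing on the left-hand side of \eqref{equazione 2}.

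First I would multiply the equation $y_{tt}+Ay=0$ by $\frac{y}{a}$ and integrate over $Q_s$. Both integrands are well defined, since $\frac{y}{\sqrt a}\in L^2$ while $\frac{y_{tt}}{\sqrt a}\in L^2$ and $\frac{Ay}{\sqrt a}=\sqrt a\,y_{xxxx}\in L^2$, so that $\frac{y\,y_{tt}}{a}$ and $y\,y_{xxxx}$ are integrable. Integrating $\int_{Q_s}\frac{y\,y_{tt}}{a}\,dx\,dt$ by parts in time produces $\int_0^1\bigl[\frac{y\,y_t}{a}\bigr]_{t=s}^{t=T}dx-\int_{Q_s}\frac{y_t^2}{a}\,dx\,dt$ (note that, unlike the multiplier of Proposition \ref{Prop 3.1}, here no boundary limit at $x=0$ intervenes, $\frac{y\,y_t}{a}$ being in $L^1$). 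For the spatial term I apply the Gauss--Green formula \eqref{GF1} with $u=v=y(t,\cdot)\in D(A)$, namely
\[
\int_0^1 y_{xxxx}\,y\,dx=y_{xxx}(t,1)y(t,1)-y_{xx}(t,1)y_x(t,1)+\int_0^1 y_{xx}^2\,dx,
\]
and then substitute the boundary conditions $y_{xxx}(t,1)=\beta y(t,1)+y_t(t,1)$ and $y_{xx}(t,1)=-\gamma y_x(t,1)-y_{tx}(t,1)$. This yields the second identity
\begin{equation}\label{eq:second-plan}
\begin{aligned}
0&=\int_0^1\Bigl[\tfrac{y\,y_t}{a}\Bigr]_{t=s}^{t=T}dx-\int_{Q_s}\tfrac{y_t^2}{a}\,dx\,dt+\int_{Q_s}y_{xx}^2\,dx\,dt\\
&\quad+\beta\int_s^T y^2(t,1)\,dt+\int_s^T y(t,1)y_t(t,1)\,dt+\gamma\int_s^T y_x^2(t,1)\,dt+\int_s^T y_x(t,1)y_{tx}(t,1)\,dt.
\end{aligned}
\end{equation}

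Next I would subtract $\frac{K}{2}$ times \eqref{eq:second-plan} from \eqref{prima uguaglianza}. Since both identities vanish, so does the combination. Crucially, in \eqref{eq:second-plan} the coefficients of $\int_{Q_s}\frac{y_t^2}{a}$ and of $\int_{Q_s}y_{xx}^2$ are $-1$ and $+1$, so the single scalar $\frac{K}{2}$ simultaneously replaces the weight $1-\frac{xa'}{a}$ of $\frac{y_t^2}{a}$ by $\frac{K}{2}+1-\frac{xa'}{a}$ and the weight $3$ of $y_{xx}^2$ by $3-\frac{K}{2}$, producing precisely the left-hand side of \eqref{equazione 2}. Transposing this left-hand side, the remaining contribution equals $-(\text{boundary terms of }\eqref{prima uguaglianza})+\frac{K}{2}(\text{boundary terms of }\eqref{eq:second-plan})$. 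Collecting like terms gives $(B.T.)$: the $\int_0^1[\frac{y\,y_t}{a}]$ term, the $\int_s^T y^2(t,1)$ and $\int_s^T y(t,1)y_t(t,1)$ terms come from \eqref{eq:second-plan} with factor $\frac{K}{2}$; the $\int_s^T y_x^2(t,1)$ and $\int_s^T y_x(t,1)y_{tx}(t,1)$ integrals combine to the coefficient $\frac{K}{2}-2$; and the $\int_0^1[y_t\frac{x}{a}y_x]$, $\frac{1}{a(1)}\int_s^T y_t^2(t,1)$, $\int_s^T y_x(t,1)y(t,1)$, $\int_s^T y_x(t,1)y_t(t,1)$ and $\int_s^T y_{xx}^2(t,1)$ terms come from \eqref{prima uguaglianza} with the sign changed.

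The argument is essentially bookkeeping and carries no analytic difficulty beyond what is already established. The only points requiring care are the justification of the $\frac{y}{a}$-multiplier step (integrability of the two integrands and the applicability of \eqref{GF1}, both valid for classical solutions in the (WD) and (SD) cases alike) and the careful, sign-correct tracking of the ten boundary terms. I expect the main—though minor—obstacle to be exactly this last matching of coefficients and signs; it closes because the shifts $+\frac{K}{2}$ and $-\frac{K}{2}$ in the two interior weights are generated by one and the same multiple of \eqref{eq:second-plan}.
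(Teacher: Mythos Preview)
Your proposal is correct and follows essentially the same approach as the paper: multiply the equation by $\frac{y}{a}$, integrate over $Q_s$, apply the Gauss--Green formula \eqref{GF1}, then form the linear combination with \eqref{prima uguaglianza} that produces the desired interior weights. The paper phrases the final step as ``summing'' \eqref{prima uguaglianza} and $\frac{K}{2}$ times the second identity (without first substituting the boundary conditions for $y_{xxx}(t,1)$ and $y_{xx}(t,1)$), whereas you substitute the boundary conditions immediately and then subtract; after moving the interior terms to the left this is the same computation, and your sign-tracking correctly reproduces $(B.T.)$.
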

\begin{proof}
Let $y$ be a classical solution of (\ref{(P)}) and fix $s\in (0,T)$. Multiplying the equation in (\ref{(P)}) by $\displaystyle\frac{y}{a}$, integrating over $Q_s$ and using (\ref{GF1}), we obtain
	\begin{equation}\label{eq sommare1}
	\begin{aligned}
		0&=\int_{Q_s}y_{tt}\frac{y}{a}dx\,dt+\int_{Q_s}yy_{xxxx}dx\,dt=\int_0^1\Bigl [y_t\frac{y}{a} \Bigr ]^{t=T}_{t=s}dx\\
		&-\int_{Q_s}\frac{y_t^2}{a}dx\,dt+\int_s^T(yy_{xxx})(t,1)dt-\int_s^T(y_xy_{xx})(t,1)dt+\int_{Q_s}y^2_{xx}dx\,dt.
	\end{aligned}
\end{equation}
Obviously,  all the previous integrals make sense and multiplying (\ref{eq sommare1}) by $\displaystyle\frac{K}{2}$, one has
\begin{equation}\label{eq sommare2}
	\begin{aligned}
		0&=\frac{K}{2}\int_{Q_s}\Bigl (-\frac{y_t^2}{a}+y^2_{xx}\Bigr )dx\,dt+\frac{K}{2}\int_0^1\Bigl [y_t\frac{y}{a} \Bigr ]^{t=T}_{t=s}dx\\
		&+\frac{K}{2}\int_s^Ty(t,1)y_{xxx}(t,1)dt-\frac{K}{2}\int_s^Ty_x(t,1)y_{xx}(t,1)dt.
	\end{aligned}
\end{equation}
By summing (\ref{prima uguaglianza}) and (\ref{eq sommare2}), we get the thesis. 
\end{proof}

By \eqref{equazione 2}, we can get the next estimate.
\begin{Proposition}\label{Prop 3.4}
		Assume $a$ (WD) or (SD) and let $y$ be a classical solution of (\ref{(P)}). Then there exists $\varepsilon_0>0$ such that for every $0<s<T$ 	
		\[
		\begin{aligned}
			&\frac{\varepsilon_0}{2}\int_{Q_s}\Biggl (\frac{y^2_t}{a}+y^2_{xx} \Biggr )dx\,dt\le \Biggl (4\vartheta +\varrho+\frac{C_\gamma}{2}\Biggl (2-\frac{K}{2}\Biggr ) \Biggr )E_y(s)\\
			&+\Biggl (\frac{K\beta}{2}+\frac{K}{4}+\beta\Biggr )\int_s^Ty^2(t,1)dt+(\beta +1+2\gamma^2)\int_s^Ty_x^2(t,1)dt,
		\end{aligned}
	\]
	where $\displaystyle\vartheta :=\max\Biggl\{\frac{4}{a(1)}+KC_{HP},1+\frac{K}{4} \Biggr \}$, $\displaystyle\varrho := \max\Biggl\{2,\frac{K}{4}+1+\frac{1}{a(1)}\Biggr \}$.
\end{Proposition}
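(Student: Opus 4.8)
The plan is to start from the identity \eqref{equazione 2}, bound its left-hand side below by a positive multiple of $\int_{Q_s}\bigl(\frac{y_t^2}{a}+y_{xx}^2\bigr)dx\,dt$, and then estimate the boundary term $(B.T.)$ above by the three groups appearing on the right-hand side. For the lower bound, since $a$ is (WD) or (SD) we have $\frac{x|a'|}{a}\le K$ with $K<2$; hence the two weights in \eqref{equazione 2} satisfy $\frac K2+1-\frac{xa'}{a}\ge 1-\frac K2$ and $3-\frac K2\ge 2$. Setting $\varepsilon_0:=1-\frac K2>0$ (the smaller of the two bounds), the left-hand side is therefore $\ge\varepsilon_0\int_{Q_s}\bigl(\frac{y_t^2}{a}+y_{xx}^2\bigr)dx\,dt$, and the stated inequality with the factor $\frac{\varepsilon_0}{2}$ will follow a fortiori once $(B.T.)$ has been estimated.

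The core of the proof is to route each of the contributions of $(B.T.)$ to its correct destination. A first group is absorbed into $E_y(s)$ through the dissipation identity of Theorem \ref{teorema energia decr}, namely $\int_s^T\bigl(y_t^2(t,1)+y_{tx}^2(t,1)\bigr)dt=E_y(s)-E_y(T)\le E_y(s)$. Concretely, I would apply Young's inequality to $\frac K2\int y\,y_t$ and to $-2\int y_x y_t$, keep $\frac1{a(1)}\int y_t^2(t,1)$ as is, and rewrite the term $\int y_{xx}^2(t,1)$ via the boundary condition $y_{xx}(t,1)=-\gamma y_x(t,1)-y_{tx}(t,1)$, so that $y_{xx}^2(t,1)\le 2\gamma^2 y_x^2(t,1)+2y_{tx}^2(t,1)$. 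Collecting the resulting $y_t^2(t,1)$ and $y_{tx}^2(t,1)$ coefficients gives $\max\{\frac K4+1+\frac1{a(1)},2\}=\varrho$, so this group is $\le\varrho E_y(s)$. A second group, the two time-boundary integrals $\frac K2\int_0^1[\frac{yy_t}{a}]^{t=T}_{t=s}dx$ and $-2\int_0^1[y_t\frac{x}{a}y_x]^{t=T}_{t=s}dx$, is controlled by Young's inequality together with the degenerate Hardy estimate of Proposition \ref{norms} (using $\int_0^1\frac{u^2}{a}\le 4C_{HP}\int_0^1(u'')^2$, the bound $\frac{x^2}{a}\le\frac1{a(1)}$ coming from the monotonicity in Hypothesis \ref{ipo1}, and the classical Hardy inequality $\int_0^1(y')^2\le4\int_0^1(y'')^2$). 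At each time level these are $\le 2\vartheta\,E_y(t)$ with $\vartheta=\max\{\frac4{a(1)}+KC_{HP},1+\frac K4\}$, and monotonicity $E_y(T)\le E_y(s)$ turns this into the total $4\vartheta E_y(s)$.

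For the genuinely mixed term $(\frac K2-2)\int_s^T y_x(t,1)y_{tx}(t,1)dt$ I would deliberately avoid Young's inequality, which would pollute the coefficient of $\int_s^T y_x^2(t,1)dt$ with a spurious $K$-dependence; instead I exploit the exact derivative $y_x y_{tx}=\frac12(y_x^2)_t$, integrate in time, discard the non-positive contribution at $t=T$ (recall $\frac K2-2<0$), and bound the contribution at $t=s$ by $y_x^2(s,1)\le C_\gamma E_y(s)$ from \eqref{stimapun3}; this produces precisely the summand $\frac{C_\gamma}{2}\bigl(2-\frac K2\bigr)E_y(s)$. The remaining elastic boundary terms are then straightforward: $\frac{K\beta}{2}\int y^2$, the $y^2$-half of $\frac K2\int y\,y_t$, and the $y^2$-half of $-2\beta\int y_x y$ assemble into $\bigl(\frac{K\beta}{2}+\frac K4+\beta\bigr)\int_s^T y^2(t,1)dt$, while the $y_x^2$-halves of $-2\int y_x y_t$ and of $-2\beta\int y_x y$, together with the $2\gamma^2 y_x^2(t,1)$ produced above and the non-positive $\gamma(\frac K2-2)\int y_x^2$ (simply dropped), assemble into $(\beta+1+2\gamma^2)\int_s^T y_x^2(t,1)dt$. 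Summing the four groups yields the claimed estimate.

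I expect the main obstacle to be precisely the treatment of the right-endpoint terms $y_{xx}^2(t,1)$, $y_{tx}^2(t,1)$ and $y_x(t,1)y_{tx}(t,1)$, none of which appears on the right-hand side: they must be eliminated by combining the dissipative boundary conditions of \eqref{(P)} with the energy identity of Theorem \ref{teorema energia decr}, and one has to recognize the perfect-derivative structure of the mixed term in order to steer it into the $C_\gamma E_y(s)$ slot rather than into $\int_s^T y_x^2(t,1)dt$. Keeping track of which Young splitting feeds which of the three target buckets, so that the constants $\vartheta$, $\varrho$ and $\frac{C_\gamma}{2}(2-\frac K2)$ come out exactly, is the delicate bookkeeping rather than any single hard inequality.
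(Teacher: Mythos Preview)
Your proposal is correct and follows essentially the same route as the paper's proof: the same lower bound via $K<2$, the same Young splittings for the cross terms, the same use of $\frac{x^2}{a}\le\frac1{a(1)}$ together with \eqref{HP} and classical Hardy for the time-boundary integrals, the same substitution $y_{xx}(t,1)=-\gamma y_x(t,1)-y_{tx}(t,1)$, the same exact-derivative trick for $(\frac K2-2)\int y_x y_{tx}$, and the same collection of $y_t^2(t,1)$ and $y_{tx}^2(t,1)$ into $\varrho\,E_y(s)$ via Theorem~\ref{teorema energia decr}. The only cosmetic difference is that the paper phrases the existence of $\varepsilon_0$ via ``$2-K\ge\varepsilon_0$'' so that both weights in \eqref{equazione 2} are $\ge\frac{\varepsilon_0}{2}$ directly, whereas you take $\varepsilon_0=1-\frac K2$ and pass to $\frac{\varepsilon_0}{2}$ a fortiori; this is harmless.
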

\begin{proof} Since by assumption $K<2$, there exists $\varepsilon _0>0$ such that $2-K \ge \varepsilon _0$. Thus,
$
		1-\frac{xa'}{a}+\frac{K}{2}=\frac{(2-K)a+2(Ka-xa')}{2a}\ge \frac{\varepsilon_0}{2}$ and $3-\frac{K}{2}>1-\frac{K}{2}\ge \frac{\varepsilon_0}{2}.$
	Thus, the boundary terms  in (\ref{equazione 2}) can be estimated by below in the following way
\begin{equation}\label{saccoferreo}
	(B.T.)=\int_{Q_s}\frac{y^2_t}{a}\Bigl (1+\frac{K}{2}-\frac{xa'}{a}\Bigr )dx\,dt+\int_{Q_s}y^2_{xx}\Bigl (3-\frac{K}{2}\Bigr )dx\,dt\ge \frac{\varepsilon_0}{2}\int_{Q_s}\Biggl (\frac{y^2_t}{a}+y^2_{xx} \Biggr )dx\,dt.
\end{equation}
Now, we estimate the boundary terms from above. First of all consider the integral
$
\ds	\int_0^1 \Biggl (-2y_t\frac{x}{a}y_x+\frac{K}{2}\frac{yy_t}{a} \Biggr )(\tau,x)dx
$
for all $\tau\in [s,T]$. Using the fact that $\displaystyle \frac{x^2}{a(x)}\le \frac{1}{a(1)}$, together with the classical Hardy's inequality and \eqref{CHP}, one has
\begin{equation*}
	\begin{aligned}
		&\int_0^1 \Biggl (-2y_t\frac{x}{a}y_x+\frac{K}{2}\frac{yy_t}{a} \Biggr )(\tau,x)dx\le\\
		&\le \int_0^1\Biggl (\frac{x^2y_x^2}{a}\Biggr )(\tau,x)dx+\int_0^1\frac{y^2_t}{a}(\tau,x)dx+\frac{K}{4}\int_0^1\frac{y^2_t}{a}(\tau,x)dx+\frac{K}{4}\int_0^1\frac{y^2}{a}(\tau,x)dx\\
		&\le \frac{1}{a(1)}\int_0^1y_x^2(\tau,x)dx+\Biggl (1+\frac{K}{4}\Biggr )\int_0^1\frac{y^2_t}{a}(\tau,x)dx+\frac{KC_{HP}}{4}\int_0^1\frac{x^2y_x^2}{x^2}(\tau,x)dx\\
		&\le \frac{1}{a(1)}\int_0^1\frac{x^2y_x^2}{x^2}(\tau,x)dx+\Biggl (1+\frac{K}{4}\Biggr )\int_0^1\frac{y^2_t}{a}(\tau,x)dx+KC_{HP}\int_0^1y_{xx}^2(\tau,x)dx\\
		&\le \frac{4}{a(1)}\int_0^1y_{xx}^2(\tau,x)dx+\Biggl (1+\frac{K}{4}\Biggr )\int_0^1\frac{y^2_t}{a}(\tau,x)dx+KC_{HP}\int_0^1y_{xx}^2(\tau,x)dx\\
		&\le 2\max\Biggl \{\frac{4}{a(1)}+KC_{HP},1+\frac{K}{4}\Biggr \}E_y(\tau)
	\end{aligned}
\end{equation*}
for all $\tau\in [s,T]$.
Hence
\begin{equation}\label{paldea}
	\begin{aligned}\int_0^1\left[ -2y_t\frac{x}{a}y_x+\frac{K}{2}\frac{yy_t}{a} \right]_{t=s}^{t=T}dx\le 4\max\Biggl \{\frac{4}{a(1)}+KC_{HP},1+\frac{K}{4}\Biggr \}E_y(s).
	\end{aligned}
\end{equation}
Now, by \eqref{stimapun3} and the fact that $K<2$, we have
\begin{equation}\label{primo BT}
\begin{aligned}
	&\gamma\Biggl (\frac{K}{2}-2\Biggr )\int_s^Ty_x^2(t,1)dt+\Biggl (\frac{K}{2}-2\Biggr )\int_s^Ty_x(t,1)y_{tx}(t,1)dt \\ &\le \Biggl (\frac{K}{2}-2\Biggr )\frac{1}{2}(y_x^2(T,1)-y_x^2(s,1))\le \left(2-\frac{K}{2}\right)\frac{1}{2}y_x^2(s,1)\le \Biggl (2-\frac{K}{2}\Biggr )\frac{C_\gamma}{2}E_y(s).
\end{aligned}
\end{equation}
Obviously
\begin{equation}\label{secondo BT}
	\frac{K}{2}\int_s^Ty(t,1)y_t(t,1)dt\le \frac{K}{4}\int_s^Ty^2(t,1)dt+\frac{K}{4}\int_s^Ty_t^2(t,1)dt,
\end{equation}
\begin{equation}\label{terzo BT}
\beta \int_s^T2y_x(t,1)y(t,1)dt\le \beta\int_s^Ty_x^2(t,1)dt+\beta\int_s^Ty^2(t,1)dt
\end{equation}
and
\begin{equation}\label{quarto BT}
 \int_s^T2y_x(t,1)y_t(t,1)dt\le \int_s^Ty_x^2(t,1)dt+\int_s^Ty_t^2(t,1)dt.
\end{equation}
Furthermore, recalling that $\gamma y_x(t,1)+y_{xx}(t,1)+y_{tx}(t,1)=0$,
\begin{equation}\label{quinto BT}
	\int_s^Ty^2_{xx}(t,1)dt\le 2\gamma^2\int_s^Ty_x^2(t,1)dt+2\int_s^Ty_{tx}^2(t,1)dt.
\end{equation}
Hence, by \eqref{saccoferreo}, (\ref{paldea})-(\ref{quinto BT}) and Theorem \ref{teorema energia decr}, we have \begin{equation*}
	\begin{aligned}
	&	\frac{\varepsilon_0}{2}\int_{Q_s}\Biggl (\frac{y^2_t}{a}+y^2_{xx} \Biggr )dx\,dt\le 4\max\Biggl \{\frac{4}{a(1)}+KC_{HP},1+\frac{K}{4}\Biggr \}E_y(s)\\
	&+\Biggl (2-\frac{K}{2}\Biggr )\frac{C_\gamma}{2}E_y(s)+\Biggl (\frac{K}{4}+1+\frac{1}{a(1)}\Biggr )\int_s^Ty_t^2(t,1)dt+2\int_s^Ty_{tx}^2(t,1)dt\\
		&+\Biggl (\frac{K}{4}+\beta +\frac{K\beta}{2}\Biggr )\int_s^Ty^2(t,1)dt+(\beta + 1+2\gamma^2)\int_s^Ty_x^2(t,1)dt
		\end{aligned}
\end{equation*}
\begin{equation*}
	\begin{aligned}
&\le 4\max\Biggl \{\frac{4}{a(1)}+KC_{HP},1+\frac{K}{4}\Biggr \}E_y(s)+\Biggl (2-\frac{K}{2}\Biggr )\frac{C_\gamma}{2}E_y(s)\\
		&+\max\Biggl\{2,\frac{K}{4}+1+\frac{1}{a(1)} \Biggr \}\int_s^T-\frac{d}{dt}E_y(t)dt\\
		&+\Biggl (\frac{K}{4}+\beta +\frac{K\beta}{2}\Biggr )\int_s^Ty^2(t,1)dt+(\beta + 1+2\gamma^2)\int_s^Ty_x^2(t,1)dt\\
		&\le 4\max\Biggl \{\frac{4}{a(1)}+KC_{HP},1+\frac{K}{4}\Biggr \}E_y(s)+\Biggl (2-\frac{K}{2}\Biggr )\frac{C_\gamma}{2}E_y(s)\\
		&+\max\Biggl\{2,\frac{K}{4}+1+\frac{1}{a(1)} \Biggr \}E_y(s)\\
		&+\Biggl (\frac{K}{4}+\beta +\frac{K\beta}{2}\Biggr )\int_s^Ty^2(t,1)dt+(\beta + 1+2\gamma^2)\int_s^Ty_x^2(t,1)dt
	\end{aligned}
\end{equation*}
and the thesis holds.
\end{proof}
In the next proposition, we will find an estimate from above for 
$\ds\int_s^Ty^2(t,1)dt+\int_s^Ty_x^2(t,1)dt.$ To this aim, set 
\begin{equation}\label{nu}
	\nu:=\begin{cases} \ds\frac{\beta \gamma}{2(\beta+\gamma)}, & \text{if } \beta, \gamma >1,\\
	 \ds\frac{ \gamma}{2(1+\gamma)}, & \text{if } \beta\in[ 0, 1], \gamma >1,\\
	\ds\frac{ \beta}{2(\beta+1)}, & \text{if } \beta>1, \gamma \in[ 0, 1],\\
 \ds\frac{1}{4}, & \text{if } \beta,\gamma\in[ 0, 1].\\
	\end{cases}
	\end{equation}
\begin{Proposition}\label{Prop 3.3}
	Assume $a$ (WD)  or (SD). If $y$ is a classical solution of (\ref{(P)}), then for every $0<s<T$ and for every  $\delta \in (0, \nu)$
	 we have
		\[
		\begin{aligned}
			&\int_s^T y^2(t,1)dt+\int_s^Ty_x^2(t,1)dt\le  \frac{2\delta}{C_\delta}\int_s^TE_y(t)dt\\
			&+\frac{1}{C_\delta}\left(2 (1+  (4C_{HP}+1)\left(C_\beta+ C_\gamma \right))+\frac{1}{\delta}\left(8 C_{HP}+ 3\right)\right)E_y(s),
		\end{aligned}
	\]
	where
\[
C_\delta:= \begin{cases} \ds 1- 2\delta\Biggl (\frac{1}{\beta}+\frac{1}{\gamma}\Biggr ), & \text{if } \beta, \gamma >1,\\
\ds 1- 2\delta\Biggl (1+\frac{1}{\gamma}\Biggr ), & \text{if } \beta \in[ 0, 1], \gamma >1,\\
\ds 1- 2\delta\Biggl (\frac{1}{\beta}+1\Biggr ), & \text{if } \beta>1, \gamma \in[ 0, 1],\\
\ds 1- 4\delta, & \text{if } \beta, \gamma \in[ 0, 1].
\end{cases}
\]
\end{Proposition}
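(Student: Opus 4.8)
The plan is to manufacture a time-dependent multiplier out of the static fourth-order variational problem of Proposition \ref{prob variazionale}, choosing its boundary data so as to reproduce the traces $y(t,1)$ and $y_x(t,1)$. Concretely, for each fixed $t\in[s,T]$ I would let $z=z(t,\cdot)\in H^2_{\frac{1}{a},0}(0,1)$ be the unique solution of the variational problem with $\lambda=\lambda(t):=y(t,1)$ and $\mu=\mu(t):=y_x(t,1)$. By \eqref{eroeferreo} it satisfies $\|z\|^2_{L^2_{\frac{1}{a}}(0,1)}\le(4C_{HP}+1)(|y(t,1)|+|y_x(t,1)|)^2$ and $|||z|||^2\le(|y(t,1)|+|y_x(t,1)|)^2$. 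The crucial observation is that, since the solution map $(\lambda,\mu)\mapsto z$ is linear and bounded and since $t\mapsto(y(t,1),y_x(t,1))$ is $\mathcal C^1$ with derivative $(y_t(t,1),y_{tx}(t,1))$ for a classical solution (the trace map on $H^2_{\frac{1}{a},0}(0,1)$ being continuous by \eqref{u(x)}--\eqref{u'(x)}), the time derivative $z_t=z_t(t,\cdot)$ is \emph{itself} the solution associated with the data $(y_t(t,1),y_{tx}(t,1))$. Hence \eqref{eroeferreo} holds verbatim for $z_t$, with $|\lambda|+|\mu|$ replaced by $|y_t(t,1)|+|y_{tx}(t,1)|$. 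This is precisely what will let me trade $z_t$-contributions against the dissipation $-\tfrac{d}{dt}E_y=y_t^2(t,1)+y_{tx}^2(t,1)$ of Theorem \ref{teorema energia decr}.

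Next I would multiply $y_{tt}+Ay=0$ by $z/a$ and integrate over $Q_s$ (well-posedness of the integrals being checked as in Proposition \ref{Prop 3.1}). Integrating the $y_{tt}$-term by parts in $t$ produces $\int_0^1[\tfrac{y_tz}{a}]_{t=s}^{t=T}dx-\int_{Q_s}\tfrac{y_tz_t}{a}dx\,dt$, while the $Ay$-term, after the Gauss--Green formula \eqref{GF1} applied to $(y,z)$, gives $\int_s^T(y_{xxx}(t,1)z(t,1)-y_{xx}(t,1)z'(t,1))dt+\int_{Q_s}y_{xx}z''dx\,dt$. Into the last space integral I substitute the variational identity tested with $\varphi=y(t,\cdot)\in H^2_{\frac{1}{a},0}(0,1)$, and into the boundary traces I substitute the dissipative conditions $y_{xxx}(t,1)=\beta y(t,1)+y_t(t,1)$ and $y_{xx}(t,1)=-\gamma y_x(t,1)-y_{tx}(t,1)$. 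The key algebraic feature is that all the $\beta z(1)y(1)$ and $\gamma z'(1)y_x(1)$ terms cancel, and because $\lambda=y(t,1)$, $\mu=y_x(t,1)$ the surviving source is exactly $\int_s^T(y^2(t,1)+y_x^2(t,1))dt$. The net outcome is the identity
\begin{equation*}
\begin{aligned}
\int_s^T\big(y^2(t,1)+y_x^2(t,1)\big)dt={}&-\int_0^1\Big[\frac{y_tz}{a}\Big]_{t=s}^{t=T}dx+\int_{Q_s}\frac{y_tz_t}{a}dx\,dt\\
&-\int_s^T\big(y_t(t,1)z(t,1)+y_{tx}(t,1)z'(t,1)\big)dt,
\end{aligned}
\end{equation*}
which isolates on the left the quantity to be estimated.

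Then I would bound the three right-hand terms. For the first, Cauchy--Schwarz in $L^2_{\frac{1}{a}}(0,1)$ together with $\int_0^1\tfrac{y_t^2}{a}dx\le 2E_y$, the bound on $\|z\|_{L^2_{\frac{1}{a}}(0,1)}$ and \eqref{stimapun3} give control by $(1+(4C_{HP}+1)(C_\beta+C_\gamma))E_y(\tau)$ at each endpoint $\tau\in\{s,T\}$; monotonicity of $E_y$ yields the $2(1+(4C_{HP}+1)(C_\beta+C_\gamma))E_y(s)$ contribution. For the second term I use Young's inequality with parameter $\delta$, estimating the $y_t$-factor by $E_y(t)$ (this produces the $2\delta\int_s^TE_y\,dt$ term) and the $z_t$-factor by \eqref{eroeferreo} for $z_t$; integrating in time and invoking $\int_s^T(y_t^2(t,1)+y_{tx}^2(t,1))dt=E_y(s)-E_y(T)\le E_y(s)$ gives a $\tfrac1\delta(4C_{HP}+1)$-type multiple of $E_y(s)$. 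The delicate term is the third: here I bound the traces of $z$ by the \emph{sharp}, case-dependent inequalities $z^2(1)\le\min\{1,\tfrac1\beta\}|||z|||^2=\tfrac{C_\beta}{2}|||z|||^2$ and $(z'(1))^2\le\min\{1,\tfrac1\gamma\}|||z|||^2=\tfrac{C_\gamma}{2}|||z|||^2$, obtained by keeping whichever of $\int_0^1(z'')^2dx$ or $\beta z^2(1)$ (resp.\ $\gamma(z'(1))^2$) dominates in $|||z|||^2$, and then $|||z|||^2\le(|y(t,1)|+|y_x(t,1)|)^2$. A further Young splitting with the same $\delta$ sends the $y_t,y_{tx}$ parts into $E_y(s)$ and leaves precisely $\delta(C_\beta+C_\gamma)\int_s^T(y^2(t,1)+y_x^2(t,1))dt$.

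Finally I would move this last term to the left: its coefficient becomes $C_\delta=1-\delta(C_\beta+C_\gamma)$, and the four cases in the definition \eqref{nu} of $\nu$ are exactly the four evaluations of $C_\beta+C_\gamma$, so that $\delta<\nu$ is equivalent to $C_\delta>0$ and division by $C_\delta$ is legitimate; collecting constants then gives the stated inequality. I expect the main obstacle to be twofold. First, rigorously justifying that $z_t$ solves the differentiated variational problem, and hence inherits \eqref{eroeferreo}: this rests on the linearity of the problem and the $\mathcal C^1$-in-time regularity of the classical solution. Second, the bookkeeping in the third term, where only the sharp, case-dependent trace bounds $\tfrac{C_\beta}{2},\tfrac{C_\gamma}{2}$ (rather than the crude constant given by $2$) produce the precise threshold $\nu$ and keep $C_\delta$ positive; using the cruder bound would force $C_\delta=1-4\delta$ in all regimes and destroy the sharp dependence on $\beta,\gamma$.
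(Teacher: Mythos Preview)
Your proposal is correct and follows essentially the same route as the paper: the same auxiliary $z$ from Proposition~\ref{prob variazionale} with data $(\lambda,\mu)=(y(t,1),y_x(t,1))$, the same multiplier identity obtained by testing the equation against $z/a$ and the variational problem against $y$, and the same three-term estimate using \eqref{eroeferreo}, \eqref{stimapun3}, Theorem~\ref{teorema energia decr}, and the differentiated problem for $z_t$. Your observation that the four cases in the definitions of $C_\delta$ and $\nu$ are nothing but the single formula $C_\delta=1-\delta(C_\beta+C_\gamma)$ (so that $\delta<\nu\iff C_\delta>0$) is a tidy unification of the paper's case split, but the underlying argument is identical.
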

\begin{proof}
Set $\lambda =y(t,1)$, $\mu =y_x(t,1)$, where $t \in [s,T]$, and let $z=z(t,\cdot)\in H^2_{{\frac{1}{a}},0}(0,1)$ be the unique solution of
\begin{equation*}
	\int_0^1z_{xx}\varphi''dx+\beta z(t,1)\varphi(1)+\gamma z'(t,1)\varphi'(1)=\lambda\varphi(1)+\mu\varphi'(1),\,\,\,\,\,\,\,\quad\forall\;\varphi\in H^2_{{\frac{1}{a}},0}(0,1).
\end{equation*}
By Proposition \ref{prob variazionale}, $z(t,\cdot)\in D(A)$ for all $t$ and solves
 \begin{equation}\label{fogliaferrea}
 	\begin{cases}
 		Az=0, \\
 		\beta z(t,1)-z_{xxx}(t,1)=\lambda, \\
 		\gamma z_x(t,1)+z_{xx}(t,1)=\mu.
 	\end{cases}
 \end{equation}
Now, multiplying the equation in (\ref{(P)}) by $\displaystyle\frac{z}{a}$ and integrating over $Q_s$, we have
\begin{equation}\label{acquecrespe0}
	\begin{aligned}
		0&=\int_{Q_s}y_{tt}\frac{z}{a}dx\,dt+\int_{Q_s}zy_{xxxx}dx\,dt\\
		&=\int_0^1\Bigl [y_t\frac{z}{a} \Bigr ]^{t=T}_{t=s}dx-\int_{Q_s}\frac{y_tz_t}{a}dx\,dt+\int_s^Tz(t,1)y_{xxx}(t,1)dt-\int_s^Tz_x(t,1)y_{xx}(t,1)dt\\
		&+\int_{Q_s}z_{xx}y_{xx}dx\,dt.
	\end{aligned}
\end{equation}
Hence, (\ref{acquecrespe0}) reads
\begin{equation}\label{acquecrespe}
\begin{aligned}
	&\int_0^1\Bigl [y_t\frac{z}{a} \Bigr ]^{t=T}_{t=s}dx-\int_{Q_s}\frac{y_tz_t}{a}dx\,dt\\
	&=-\int_s^Tz(t,1)y_{xxx}(t,1)dt+\int_s^Tz_x(t,1)y_{xx}(t,1)dt-\int_{Q_s}z_{xx}y_{xx}dx\,dt.
\end{aligned}
\end{equation}
On the other hand, multiplying the equation in (\ref{fogliaferrea}) by $\displaystyle\frac{y}{a}$ and integrating over $Q_s$, we have
$
	\int_{Q_s}z_{xxxx}y\,dx\,dt=0.
$
By (\ref{GF1}), we get
$
\int_{Q_s}z_{xx}y_{xx}dx\,dt=-\int_s^Tz_{xxx}(t,1)y(t,1)dt+\int_s^Ty_x(t,1)z_{xx}(t,1)dt.
$
Substituting in (\ref{acquecrespe}), using the fact that $z_{xxx}(t,1)=\beta z(t,1)-\lambda$, $z_{xx}(t,1)=-\gamma z_x(t,1)+\mu$, $\lambda =y(t,1)$ and $\mu=y_x(t,1)$, we have
\begin{equation*}
\begin{aligned}
&		\int_0^1\Bigl [y_t\frac{z}{a} \Bigr ]^{t=T}_{t=s}dx-\int_{Q_s}\frac{y_tz_t}{a}dx\,dt=-\int_s^Tz(t,1)y_{xxx}(t,1)dt+\int_s^Tz_x(t,1)y_{xx}(t,1)dt\\
		&+\int_s^Tz_{xxx}(t,1)y(t,1)dt-\int_s^Ty_x(t,1)z_{xx}(t,1)dt\\
		&=-\int_s^Tz(t,1)y_{xxx}(t,1)dt+\int_s^Tz_x(t,1)y_{xx}(t,1)dt+\int_s^Ty(t,1)[\beta z(t,1)-\lambda ]dt\\
		&-\int_s^Ty_x(t,1)[-\gamma z_x(t,1)+\mu ]dt\\
		&=\int_s^Tz(t,1)[\beta y(t,1)-y_{xxx}(t,1)]dt\\ &+\int_s^Tz_x(t,1)[y_{xx}(t,1)+\gamma y_x(t,1)]dt-\int_s^Ty^2(t,1)dt-\int_s^Ty_x^2(t,1)dt.
\end{aligned}
\end{equation*}
Then 
\begin{equation}\label{solcoferreo}
\begin{aligned}
	\int_s^Ty^2(t,1)dt+\int_s^Ty_x^2(t,1)dt&=-\int_s^T(y_tz)(t,1)dt-\int_s^T(z_xy_{tx})(t,1)dt\\ &-\int_0^1\Bigl [y_t\frac{z}{a} \Bigr ]^{t=T}_{t=s}dx+\int_{Q_s}\frac{y_tz_t}{a}dx\,dt.
\end{aligned}
\end{equation}
Thus, in order to estimate $\int_s^Ty^2(t,1)dt+\int_s^Ty_x^2(t,1)dt$, we have to consider the four terms in the previous equality. So, by (\ref{eroeferreo}), \eqref{stimapun3} and Theorem \ref{teorema energia decr} we have, for all $\tau\in [s,T]$,
\begin{equation*}
	\begin{aligned}
		\int_0^1\Bigl |\frac{y_tz}{a}(\tau,x) \Bigr |dx&\le  \frac{1}{2}\int_0^1\frac{y^2_t(\tau,x) }{a(x)}dx+\frac{1}{2}\int_0^1\frac{z^2(\tau,x) }{a(x)}dx\\
		&\le \frac{1}{2}\int_0^1\frac{y^2_t(\tau,x) }{a(x)}dx+(4C_{HP}+1)\left(y^2(\tau,1)+y_x^2(\tau,1)\right)\\
		& \le E_y(\tau) +(4C_{HP}+1)\left(C_\beta+ C_\gamma \right)E_y(\tau)\\
& \le (1+  (4C_{HP}+1)\left(C_\beta+ C_\gamma \right))E_y(s).
	\end{aligned}
\end{equation*}
By Theorem \ref{teorema energia decr}, 
\begin{equation}\label{latios}
\Bigl |	\int_0^1\Bigl [\frac{y_tz}{a}\Bigr ]^{t=T}_{t=s} dx\Bigr |\le 2 (1+  (4C_{HP}+1)\left(C_\beta+ C_\gamma \right))E_y(s).
\end{equation}
Moreover, for any $\delta >0$ we have
\begin{equation}\label{colloferreo}
	\int_s^T|(y_tz)(t,1)|dt\le \frac{1}{\delta}\int_s^Ty^2_t(t,1)dt+\delta\int_s^Tz^2(t,1)dt.
\end{equation}
By definition of $|||\cdot|||$,  if $\beta >1$, one has
$
	z^2(t,1)\le \frac{1}{\beta}|||z|||^2\le \frac{1}{\beta}(|\lambda| +|\mu|)^2 \le  \frac{2}{\beta}(y^2(t,1)+y_x^2(t,1));
$
on the other hand, for all $\beta \in [0,1]$, by \eqref{u(x)}, it results
$
z^2(t,1)\le |||z|||^2\le 2(y^2(t,1)+y_x^2(t,1)).
$
Thus, by (\ref{colloferreo}), we have
\begin{equation}\label{latias}
	\begin{aligned}
		\int_s^T|(y_tz)(t,1)|dt&\le  \begin{cases}\ds \frac{1}{\delta}\int_s^Ty^2_t(t,1)dt+ \frac{2\delta}{\beta}\int_s^T(y^2+y_x^2)(t,1)dt,& \text{if } \beta >1,\\
	\ds	\frac{1}{\delta}\int_s^Ty^2_t(t,1)dt+ 2\delta\int_s^T(y^2+y_x^2)(t,1)dt,& \text{if } \beta \in [0,1].
\end{cases}
	\end{aligned}
\end{equation}
In a similar way,  it is possible to find the next estimate
\begin{equation}\label{LATIOS}
	\int_s^T|(z_xy_{tx})(t,1)|dt\le \begin{cases} \ds \frac{1}{\delta}\int_s^Ty^2_{tx}(t,1)dt+ \frac{2\delta}{\gamma}\int_s^T(y^2+y_x^2)(t,1)dt, & \text{if } \gamma >1,\\
	\ds \frac{1}{\delta}\int_s^Ty^2_{tx}(t,1)dt+ 2\delta\int_s^T(y^2+y_x^2)(t,1)dt, & \text{if } \gamma \in[0,1],
	\end{cases}
\end{equation}
being
$
	\bs z_x^2(t,1)\le \frac{1}{\gamma}|||z|||^2\le  \frac{2}{\gamma}(y^2(t,1)+y_x^2(t,1)),
$
if $\gamma >1$ (by definition of $|||\cdot|||$), and
 $
\bs z_x^2(t,1)\le |||z|||^2\le 2(y^2(t,1)+y_x^2(t,1))
$
for all $\gamma \in [0,1]$ (by \eqref{u'(x)} and by the definition of $|||\cdot|||$).
Therefore, summing \eqref{latias} and \eqref{LATIOS} and applying Theorem \ref{teorema energia decr} we obtain
\begin{equation}\label{somma}
\begin{aligned}
&\int_s^T|(y_tz)(t,1)|dt+	\int_s^T|(z_xy_{tx})(t,1)|dt \\
&\le \frac{1}{\delta}\int_s^T -\frac{d}{dt}E_y(t)dt+2\delta\Biggl (\frac{1}{\beta}+\frac{1}{\gamma}\Biggr )\int_s^T(y^2+y_x^2)(t,1)dt\\
&\le  \frac{E_y(s)}{\delta}+2\delta\Biggl (\frac{1}{\beta}+\frac{1}{\gamma}\Biggr )\int_s^T(y^2+y_x^2)(t,1)dt,
\end{aligned}
\end{equation}
if $\beta, \gamma >1$. On the other hand,
\begin{equation}\label{somma1}
\int_s^T|(y_tz)(t,1)|dt+	\int_s^T|(z_xy_{tx})(t,1)|dt\le  \frac{E_y(s)}{\delta}+2\delta\Biggl (1+\frac{1}{\gamma}\Biggr )\int_s^T(y^2+y_x^2)(t,1)dt,
\end{equation}
if $\beta \in[ 0, 1]$ and $\gamma >1$,
\begin{equation}\label{somma2}
\int_s^T|(y_tz)(t,1)|dt+	\int_s^T|(z_xy_{tx})(t,1)|dt\le  \frac{E_y(s)}{\delta}+2\delta\Biggl (\frac{1}{\beta}+1\Biggr )\int_s^T(y^2+y_x^2)(t,1)dt,
\end{equation}
if $\beta >1$ and $\gamma\in[ 0, 1]$,
\begin{equation}\label{somma3}
\int_s^T|(y_tz)(t,1)|dt+	\int_s^T|(z_xy_{tx})(t,1)|dt\le  \frac{E_y(s)}{\delta}+4\delta\int_s^T(y^2+y_x^2)(t,1)dt,
\end{equation}
if $\beta, \gamma\in[ 0, 1]$.
Finally, we estimate the last integral in (\ref{solcoferreo}), i.e. $\ds\int_{Q_s}\Bigl |\frac{y_tz_t}{a}\Bigr |dx\,dt$. To this aim, consider again problem (\ref{falenaferrea}) and differentiate with respect to $t$. Thus
\begin{equation*}
	\begin{cases}
		a(z_t)_{xxxx}=0, \\
		\beta z_t(t,1)-(z_t)_{xxx}(t,1)=y_t(t,1), \\
		\gamma (z_t)_x(t,1)+(z_t)_{xx}(t,1)=(y_x)_t(t,1).
	\end{cases}
\end{equation*}
Clearly, $z_t$ satisfies \eqref{eroeferreo}, in particular
\[\ds 
	\norm{z_t (t)}^2_{L^2_{\frac{1}{a}}\!\!(0, 1)}\!\!\le\left(4 C_{HP}\!+ \!1\right)\!(|y_t(t,1)|+|y_{tx}(t,1)|)^2
	\]
	and \[\ds |||z_t(t)|||^2\le  (|y_t(t,1)|+|y_{tx}(t,1)|)^2.
\]
Thus, for $\delta >0$ we find
\begin{equation}\label{keldeo}
	\begin{aligned}
		\int_{Q_s}\Bigl |\frac{y_tz_t}{a}\Bigr |dx\,dt&\le \delta\int_{Q_s}\frac{y^2_t}{a}dx\,dt+\frac{1}{\delta}\int_{Q_s}\frac{z^2_t}{a}dx\,dt \\
		&\le 2\delta\int_s^TE_y(t)dt+\frac{2}{\delta}\left(4 C_{HP}+ 1\right)\int_s^T(y^2_t(t,1)+ y^2_{tx}(t,1))dt\\
		&=2\delta\int_s^TE_y(t)dt+\frac{2}{\delta}\left(4 C_{HP}+ 1\right)\int_s^T-\frac{d}{dt}E_y(t)dt\\
		&\le 2\delta\int_s^TE_y(t)dt+\frac{2}{\delta}\left(4 C_{HP}+ 1\right)E_y(s).
	\end{aligned}
\end{equation}
Coming back to (\ref{solcoferreo}) and using (\ref{latios}), (\ref{somma}) - \eqref{somma3}, (\ref{keldeo}), we obtain for every  $\delta \in (0, \nu)$
\begin{equation*}
	\begin{aligned}
		C_\delta\int_s^T(y^2(t,1)+y_x^2(t,1))dt&\le  2 (1+  (4C_{HP}+1)\left(C_\beta+ C_\gamma \right))E_y(s)\\
			&+2\delta\int_s^TE_y(t)dt+\frac{1}{\delta}\left(8 C_{HP}+ 3\right)E_y(s),
	\end{aligned}
\end{equation*}
and the thesis follows.
\end{proof}

As a consequence of Propositions \ref{Prop 3.4} and \ref{Prop 3.3}, we can formulate the main result of the paper, whose proof is based on  \cite[Theorem 8.1]{Ko}.
\begin{Theorem}\label{teoremaprincipale}
	Assume $a$ (WD) or (SD) and let $y$ be a mild solution of (\ref{(P)}). Then for all $t>0$ and for all $
	\ds	\delta \in \left(0,\min \left\{\nu, \frac{\varepsilon_0}{C_1}\right\}\right),
$
	\begin{equation}\label{Stabilità}
		E_y(t)\le E_y(0)e^{1-\frac{t}{M}},
	\end{equation}
where
$
M:=\frac{C_2}{\varepsilon_0 - \delta C_1},
$ 
\[C_1:= \frac{2}{C_\delta}\left(\frac{K\beta}{2}+\frac{K}{4}+\beta+\varepsilon_0\frac{\beta}{2}+\beta +1+2\gamma^2+\varepsilon_0\frac{\gamma}{2}\right),\]
\[
	C_2:=4\vartheta +\varrho+\frac{C_\gamma}{2}\Biggl (2-\frac{K}{2}\Biggr )+ C_3
	\]
	and
	\[
	\begin{aligned} C_3:=\ds&\frac{1}{C_\delta}\left( 2\!+\!2  (4C_{HP}+1)\left(C_\beta\!+\! C_\gamma \right)+\frac{1}{\delta}\left(8 C_{HP}+ 3\right)\right)\\&
	\cdot\left(\frac{K\beta}{2}+\frac{K}{4}+\beta+\varepsilon_0\frac{\beta}{2}+\beta +1+2\gamma^2+\varepsilon_0\frac{\gamma}{2}\right).
\end{aligned}
\]
\end{Theorem}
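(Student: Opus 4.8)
The plan is to merge the two principal estimates of this section into a single integral inequality of Gronwall--Komornik type and then to invoke the classical integral inequality of \cite[Theorem 8.1]{Ko}. Concretely, Proposition \ref{Prop 3.4} controls the space--time integral of $\frac{y_t^2}{a}+y_{xx}^2$ by $E_y(s)$ plus the two boundary integrals $\int_s^T y^2(t,1)\,dt$ and $\int_s^T y_x^2(t,1)\,dt$, while Proposition \ref{Prop 3.3} controls the sum of these same boundary integrals by $\int_s^T E_y(t)\,dt$ and $E_y(s)$. Feeding the second into the first will produce a bound of the form $\int_s^{+\infty} E_y(t)\,dt\le M\,E_y(s)$, from which the exponential decay \eqref{Stabilità} follows. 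Since Propositions \ref{Prop 3.4} and \ref{Prop 3.3} are stated for classical solutions, I first prove the estimate for initial data in $D(\mathcal{A})$ and then extend it to an arbitrary mild solution by density of $D(\mathcal{A})$ in $\mathcal{H}_0$ and the continuous dependence on the data given by Theorem \ref{Theorem regol}, using that $E_y$ is continuous in $t$ for mild solutions.

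For a classical solution I start from the definition of the energy, which upon integration over $[s,T]$ yields
\[
\int_s^T E_y(t)\,dt=\frac12\int_{Q_s}\Bigl(\frac{y_t^2}{a}+y_{xx}^2\Bigr)dx\,dt+\frac{\beta}{2}\int_s^T y^2(t,1)\,dt+\frac{\gamma}{2}\int_s^T y_x^2(t,1)\,dt.
\]
Multiplying by $\varepsilon_0$ and bounding the interior integral on the right by Proposition \ref{Prop 3.4}, the term $\frac{\varepsilon_0}{2}\int_{Q_s}(\frac{y_t^2}{a}+y_{xx}^2)$ is replaced by $E_y(s)$ together with the two boundary integrals; collecting the coefficients of $\int_s^T y^2(t,1)\,dt$ and $\int_s^T y_x^2(t,1)\,dt$ and estimating each boundary integral by their sum, I obtain
\[
\varepsilon_0\int_s^T E_y(t)\,dt\le\Bigl(4\vartheta+\varrho+\tfrac{C_\gamma}{2}\bigl(2-\tfrac{K}{2}\bigr)\Bigr)E_y(s)+P\Bigl(\int_s^T y^2(t,1)\,dt+\int_s^T y_x^2(t,1)\,dt\Bigr),
\]
where $P=\frac{K\beta}{2}+\frac{K}{4}+\beta+\varepsilon_0\frac{\beta}{2}+\beta+1+2\gamma^2+\varepsilon_0\frac{\gamma}{2}$ is precisely the factor appearing inside $C_1$ and $C_3$.

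Next I insert Proposition \ref{Prop 3.3} to bound the bracketed boundary term. This contributes a multiple $\frac{2\delta P}{C_\delta}=\delta C_1$ of $\int_s^T E_y(t)\,dt$ and a multiple of $E_y(s)$ equal exactly to $C_3$. Transferring the $\delta C_1\int_s^T E_y$ contribution to the left-hand side and imposing $\varepsilon_0-\delta C_1>0$, that is $\delta<\varepsilon_0/C_1$, gives
\[
(\varepsilon_0-\delta C_1)\int_s^T E_y(t)\,dt\le C_2\,E_y(s),\qquad 0<s<T.
\]
Since $M=C_2/(\varepsilon_0-\delta C_1)$ is independent of $T$, letting $T\to+\infty$ by monotone convergence and then $s\to0^+$ by continuity of $E_y$ yields $\int_s^{+\infty} E_y(t)\,dt\le M\,E_y(s)$ for all $s\ge0$. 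As $E_y$ is nonnegative and non-increasing by Theorem \ref{teorema energia decr}, Komornik's inequality \cite[Theorem 8.1]{Ko} applied with $\omega=1/M$ produces \eqref{Stabilità}. The delicate point is the bookkeeping needed to match the constants $C_1,C_2,C_3$ exactly---in particular tracking how the $\delta$-dependent factors of Proposition \ref{Prop 3.3} split into the absorbable term $\delta C_1\int_s^T E_y$ and the $E_y(s)$-term---together with the check that the admissible range $\delta\in(0,\min\{\nu,\varepsilon_0/C_1\})$ simultaneously keeps $C_\delta>0$ (needed to apply Proposition \ref{Prop 3.3}) and $\varepsilon_0-\delta C_1>0$ (needed for the absorption).
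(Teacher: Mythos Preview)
Your proposal is correct and follows essentially the same approach as the paper's proof: both combine the energy definition with Propositions \ref{Prop 3.4} and \ref{Prop 3.3} to obtain $(\varepsilon_0-\delta C_1)\int_s^T E_y(t)\,dt\le C_2 E_y(s)$ and then invoke \cite[Theorem 8.1]{Ko}, extending to mild solutions by density. Your write-up is in fact slightly more explicit than the paper's in spelling out the passage $T\to+\infty$, $s\to 0^+$ needed before Komornik's lemma applies, and in tracking the bookkeeping that identifies $P$, $\delta C_1$, and $C_3$.
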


	\begin{proof}
		As a first step, consider $y$ a classical solution of \eqref{(P)} and take $\delta>0$ such that
$
	\ds	\delta < \min \left\{\nu, \frac{\varepsilon_0}{C_1}\right\},
$
	where $\nu$ is the constant defined in \eqref{nu}.
	By definition of $E_y$ and Propositions \ref{Prop 3.4}, \ref{Prop 3.3}, we have
		\[
		\begin{aligned}
&		\varepsilon_0  \int_s^TE_y(t)dt=\frac{\varepsilon_0}{2}\int_{Q_s} \Biggl (\frac{y^2_t(t,x)}{a(x)}+y^2_{xx}(t,x) \Biggr )dxdt\\
		&+\varepsilon_0\frac{\beta}{2}\int_s^Ty^2(t,1)dt+\varepsilon_0\frac{\gamma}{2}\int_s^Ty_x^2(t,1)dt\\
		&\le  \Biggl (4\vartheta +\varrho+\frac{C_\gamma}{2}\Biggl (2-\frac{K}{2}\Biggr ) \Biggr )E_y(s)+\Biggl (\frac{K\beta}{2}+\frac{K}{4}+\beta+\varepsilon_0\frac{\beta}{2}\Biggr )\int_s^Ty^2(t,1)dt\\
		&+\left(\beta +1+2\gamma^2+\varepsilon_0\frac{\gamma}{2}\right)\int_s^Ty_x^2(t,1)dt\\
	&	\le  \Biggl (4\vartheta +\varrho+\frac{C_\gamma}{2}\Biggl (2-\frac{K}{2}\Biggr )+ C_3 \Biggr )E_y(s)+\delta C_1\int_s^TE_y(t)dt.
		\end{aligned}
	\]
This implies $
			\Biggl [\varepsilon_0-\delta C_1 \Biggr ]\int_s^TE_y(t)dt\le C_2E_y(s).
		$
Hence, we can apply \cite[Theorem 8.1]{Ko} with $M:= \frac{C_2}{\varepsilon_0-\delta C_1}$ and \eqref{Stabilità} holds.
	If $y$ is the mild solution of the problem, we can proceed as in \cite{Stability_Genni_Dimitri}, obtaining the thesis.
	\end{proof}

\section{Conclusions and open problems}\label{conclusion}
In this paper we have considered a beam equation  governed by a degenerate operator in non divergence form under clamped conditions at the degeneracy point and dissipative conditions at the other endpoint. In Theorem \ref{teoremaprincipale} we provide some
conditions for the uniform exponential decay of solutions for the associated
Cauchy problem. The same equation under a controllability point of view is considered in \cite{CF_Beam}. On the other hand 
the stability and the controllability for the problem in divergence form is studied recently in \cite{CF_div}. Thus, this paper fits the current lines of research in degenerate problems.

\bibliographystyle{siamplain}

\bibliography{references}

\begin{thebibliography}{99}
\bibitem{alcale}
	F. Alabau-Boussouira, P. Cannarsa, G. Leugering, {\it Control and stabilization of degenerate wave equations}, SIAM J. Control Optim. {\bf 55} (2017), 2052-2087.

\bibitem{behn} C. Behn, J. Steigenberger, C. Will, {\it Effects of Boundary Damping on Natural Frequencies
in Bending Vibrations of Intelligent Vibrissa Tactile Systems}, International Journal on Advances in Intelligent Systems \textbf{8} (2015), 245-254.
	\bibitem{daprato}
	A. Bensoussan, G. Da Prato, M.C. Delfour, S.K. Mitter, ``Representation and Control of Infinite Dimensional Systems'', Second edition. Birkh\"auser Boston, Inc., Boston, MA, 2007.
	\bibitem{biselli} A. Biselli, M.P. Coleman, {\it The Exact Frequency Equations for the Euler-Bernoulli Beam Subject to Boundary Damping}, International Journal of Acoustics and Vibration  \textbf{25} (2020), 183-189.
	\bibitem{BFM wave eq}
	I. Boutaayamou, G. Fragnelli, D. Mugnai, {\it Boundary controllability for a degenerate wave equation in non divergence form with drift}, to appear in SIAM J. Control Optim.
		\bibitem{bugariu} I.F. Bugariu, S. Micu, I. Roventa, {\it Approximation of the controls for the beam equation with vanishing viscosity}, Mathematics of Computation \textbf{85} (2016), 2259-2303.
		\bibitem{CF}
		A. Camasta, G. Fragnelli, {\it A degenerate operator in non divergence form}, Recent 
Advances in Mathematical Analysis, Trends in Mathematics,
https://doi.org/10.1007/978-3-031-20021-2.
		\bibitem{CF_Neumann} A. Camasta, G. Fragnelli, {\it Degenerate fourth order parabolic equations with Neumann boundary conditions}, submitted for publication, arXiv:2203.02739.
		\bibitem{CF_Wentzell} A. Camasta, G. Fragnelli, {\it Fourth order differential operators with interior degeneracy and generalized Wentzell boundary conditions},   Electron. J. Differ. Equ. \textbf{2022} (2022), 1-22.
\bibitem{CF_Beam} A. Camasta, G. Fragnelli, {\it Boundary controllability for a degenerate beam equation}, submitted for publication, arXiv:2302.06453. 
\bibitem{CF_div} A. Camasta, G. Fragnelli, {\it New results on controllability and stability for degenerate Euler-Bernoulli type equations}, submitted for publication, arXiv: 2306.11851. 
\bibitem{cfr1}
P. Cannarsa, G. Fragnelli, D. Rocchetti, {\it Controllability results for a class of one-dimensional degenerate parabolic problems in nondivergence form}, J. Evol. Equ. \textbf{8} (2008), 583-616.
\bibitem{cavalcanti}  M.M. Cavalcanti, V.N. Domingos Cavalcanti, M.A. Jorge Silva, V. Narciso {\it Stability for extensible beams with a single degenerate
nonlocal damping of Balakrishnan-Taylor type},  J. Differential Equations \textbf{290} (2021) 197–222.
\bibitem{chen1} G. Chen, S.G. Krantz, D.W. Ma, C.E. Wayne, H.H. West, {\it
The Euler-Bernoulli beam equation with boundary energy dissipation}, Operator methods for optimal control problems (New Orleans, La., 1986), 67–96, Lecture Notes in Pure and Appl. Math., 108, Dekker, New York, 1987.
\bibitem{chen2} G. Chen, M.C. Delfour, A.M. Krall, G. Payre, {\it
Modeling, stabilization and control of serially connected beams},
SIAM J. Control Optim. \textbf{25} (1987), 526–546.
\bibitem{chen} G. Chen, J. Zhou, {\it
The wave propagation method for the analysis of boundary stabilization in vibrating structures},
SIAM J. Appl. Math. \textbf{50} (1990), 1254–1283.
\bibitem{coleman} M.P. Coleman, L.A. McSweeney, {\it The Exact Frequency Equations for the Rayleigh
and Shear Beams with Boundary Damping}, International Journal of Acoustics and Vibration \textbf{25} (2020), 3-8.
\bibitem{Cong} Z. Cong, S. Chunyou, {\it Stability for a Class of Extensible Beams with Degenerate Nonlocal Damping},  J. Geom. Anal. \textbf{33} (2023), https://doi.org/10.1007/s12220-023-01353-3.
			\bibitem{Libro Genni Dimitri}
		G. Fragnelli, D. Mugnai, ``Control of Degenerate and Singular Parabolic Equations. Carleman Estimates and Observability'', SpringerBriefs in Mathematics, Springer International Publishing, 2021.
\bibitem{Stability_Genni_Dimitri} 
G. Fragnelli, D. Mugnai, {\it Linear stabilization for a degenerate wave equation in non divergence form with drift}, submitted for publication, arXiv:2212.05264.
\bibitem{JLJ}
J. Jost, X. Li-Jost, ``Calculus of variations'', Cambridge University  (1998).
\bibitem{Ko}
V. Komornik, ``Exact Controllability and Stabilization: The Multiplier Method'', RAM Res.
Appl. Math. 36, Masson, Paris, John Wiley, Chichester, UK, 1994


\bibitem{krabs} W. Krabs, G. Leugering, T.I. Seidman, {\it On Boundary Controllability of a Vibrating
Plate}, Applied Mathematics $\and$ Optimization \textbf{13} (1985), 205-229.
\bibitem{lasieskatriggiani} I. Lasiecka, R. Triggiani, {\it Exact controllability of the Euler-Bernoulli equation with controls
in the Dirichlet and Neumann boundary conditions: a nonconservative case}, SIAM Journal
on Control and Optimization \textbf{27} (1989), 330-373.
\bibitem{leon} L. L\'eon, E. Zuazua, {\it Boundary controllability of the finite-difference space semidiscretizations
of the beam equation}, ESAIM Control Optim. Calc. Var. \textbf{8} (2002), 827-862.
\bibitem{lions} J.L. Lions, {\it Exact controllability, stabilization and perturbations for distributed system}, SIAM
Review \textbf{30} (1988), 1-68.
\bibitem{narciso} V. Narciso, D. Ekinci, E. Piskin, {\it On a beam model with degenerate nonlocal nonlinear damping}, Evolution Equations and Control Theory \textbf{12} (2023), 732-751.
\bibitem{rao} B.P. Rao, {\it Exact boundary controllability of a hybrid system of elasticity by the HUM method}, ESAIM: Control, Optimisation and Calculus of Variations \textbf{6} (2001), 183-199.
\bibitem{sandilo} S.H. Sandilo, W.T.V. Horssen, {\it On Boundary Damping for an Axially Moving Tensioned Beam}, Journal of Vibration and Acoustics \textbf{134} (2012), 8 
pp.

	\end{thebibliography}

	\end{document}